\documentclass[12pt,reqno,a4paper]{amsart}
\usepackage{cite}
\usepackage{pst-all}
\usepackage{amssymb}


\title[On the isoperimetric problem]{Some remarks on the isoperimetric
  problem for the higher eigenvalues of the Robin and Wentzell
  Laplacians}

\date{}

\author{J. B. Kennedy}
\dedicatory{\upshape
School of Mathematics and Statistics,
University of Sydney, NSW 2006, Australia\\[.5em]
\texttt{J.Kennedy@maths.usyd.edu.au}}

\newtheorem{theorem}{Theorem}[section]
\newtheorem{lemma}[theorem]{Lemma}
\newtheorem{proposition}[theorem]{Proposition}

\theoremstyle{definition}

\newtheorem{remark}[theorem]{Remark}

\numberwithin{equation}{section}

\DeclareMathOperator{\dist}{dist}

\DeclareMathOperator{\divergence}{div}

\newcommand{\R}{\mathbb{R}}
\newcommand{\N}{\mathbb{N}}

\DeclareMathAccent{\ocirc}{\mathalpha}{operators}{"17}


\begin{document}

\begin{abstract}
  We consider the problem of minimising the $k$th eigenvalue, $k \geq
  2$, of the ($p$-)Laplacian with Robin boundary conditions with
  respect to all domains in $\R^N$ of given volume $M$.  When $k=2$,
  we prove that the second eigenvalue of the $p$-Laplacian is
  minimised by the domain consisting of the disjoint union of two
  balls of equal volume, and that this is the unique domain with this
  property.  For $p=2$ and $k \geq 3$, we prove that in many cases a
  minimiser cannot be independent of the value of the constant
  $\alpha$ in the boundary condition, or equivalently of the volume
  $M$. We obtain similar results for the Laplacian with generalised
  Wentzell boundary conditions $\Delta u + \beta \frac{\partial
    u}{\partial \nu} + \gamma u = 0$.
\end{abstract}

\thanks{\emph{Mathematics Subject Classification} (2000). 35P15
  (35J25, 35J60)}

\thanks{\emph{Key words and phrases}. Laplacian, $p$-Laplacian,
  isoperimetric problem, shape optimisation, Robin boundary
  conditions, Wentzell boundary conditions}

\maketitle

\section{Introduction}
\label{sec:intro}

We are interested in the eigenvalue problem
\begin{equation}
  \label{eq:robinproblem}
  \begin{aligned}
    -\divergence(|\nabla u|^{p-2}\nabla u) &= \lambda |u|^{p-2}u
    \qquad &\text{in $\Omega$},\\
    |\nabla u|^{p-2}\frac{\partial u}{\partial \nu} + \alpha
    |u|^{p-2}u &=0 &\text{on $\partial \Omega$},
  \end{aligned}
\end{equation}
where $\Omega \subset \R^N$ is a bounded, Lipschitz domain,
$1<p<\infty$, $\alpha > 0$, and $\nu$ is the outward pointing unit
normal to $\Omega$. Here $\Delta_p u := \divergence (|\nabla
u|^{p-2}\nabla u)$ is the $p$-Laplacian of $u$ and the boundary
conditions in \eqref{eq:robinproblem} are of Robin type.

It is known that if $\Omega$ is connected, then analogous to the case
of Dirichlet boundary conditions there is an isolated simple first
eigenvalue $\lambda_1 = \lambda_1(\Omega, \alpha) > 0$ such that only
eigenfunctions associated with $\lambda_1$ do not change sign.
Moreover, there is a well-defined second eigenvalue $\lambda_2 >
\lambda_1$ at the base of the rest of the spectrum obtainable by the
L-S principle (see \cite[Section~5.5]{le:06:pl}). If $p=2$, then we
recover the usual sequence of eigenvalues $0 < \lambda_1 < \lambda_2
\leq \lambda_3 \leq \ldots \to \infty$ exhausting the spectrum (see
for example \cite{daners:00:rbv}). For not necessarily connected
domains $\Omega$, we wish to study minimisation problems of the form
\begin{equation}
  \label{eq:minproblem}
  \min \, \{\lambda_k(\Omega, \alpha): \Omega \subset \R^N \text{\ is 
    bounded, Lipschitz,\ } |\Omega| = M\}
\end{equation}
where $M>0$ and $\alpha > 0$ are fixed, $k \geq 2$ if $p=2$ and $k=2$
otherwise, and $|\,.\,|$ is $N$-dimensional Lebesgue measure. Note
that we list repeated eigenvalues according to their multiplicities.
Such problems are often called isoperimetric problems as they depend
on the geometry of the underlying domain.

When $k=1$ the Faber-Krahn inequality asserts that the unique solution
to \eqref{eq:minproblem} is a ball $B$ with $|B|=M$ (see
\cite{dai:08:rpl,bucur:09:aa}). When $k=2$ and $p=2$ it was proved in
\cite{kennedy:09:lr2} that a solution to \eqref{eq:minproblem}, which
we shall call $D_2$, is disjoint union of two equal balls of volume
$M/2$.

For $k=2$, it was proved in \cite{kennedy:08:wfk} that the domain
which we shall call $D_2$, consisting of the disjoint union of two
equal balls of volume $M/2$, is a solution to \eqref{eq:minproblem}
when $p=2$. Our first goal here is to generalise this result to all
$1<p<\infty$, and at the same time prove uniqueness of this minimiser
(that is, sharpness of the associated inequality). This is done in
Section~\ref{sec:second} (see Theorem~\ref{th:rp2}).

We consider the problem \eqref{eq:minproblem} for $k \geq 3$ in
Section~\ref{sec:rhigher}. Here we restrict our attention to the case
$p=2$ because the spectrum of the $p$-Laplacian is not well understood
otherwise. In particular, it is not known if the L-S sequence exhausts
the spectrum, although we expect our observations to generalise easily
if this is the case. We prove that for many values of $N$ and $k$
there cannot be a solution \eqref{eq:minproblem} independent of
$\alpha>0$ in \eqref{eq:robinproblem}, or equivalently, of the volume
$M>0$. (See Theorem~\ref{th:robink}.) Note that actually proving the
existence of a solution to \eqref{eq:minproblem} in general is an
extremely difficult problem -- this has not even yet been proved in
the easier Dirichlet case (see \cite{bucur:00:3de,henrot:03:min}), and
the Robin problem lacks many of the properties of the Dirichlet
problem (see Remark~\ref{rem:higherexamples}).

In Section~\ref{sec:whigher}, we consider the Laplacian with
generalised Wentzell boundary conditions
\begin{equation}
  \label{eq:wentzellproblem}
  \begin{aligned}
    -\Delta u&=\Lambda u \quad&\text{in $\Omega$},\\
    \Delta u +\beta\frac{\partial u}{\partial\nu}+\gamma u&=0
    \quad &\text{on $\partial \Omega$},
  \end{aligned}
\end{equation}
where $\beta, \gamma > 0$. Here too there exists a sequence of
eigenvalues $0 < \Lambda_1(\Omega) \leq \Lambda_2(\Omega) \leq \ldots$
exhausting the spectrum. Moreover, the first eigenvalue $\Lambda_1$
satisfies the (sharp) Faber-Krahn inequality $\Lambda_1 (\Omega) \geq
\Lambda_1 (B)$ for all bounded, Lipschitz $\Omega \subset \R^N$ as the
solution for $k=1$ to the analogue of \eqref{eq:minproblem} (see
\cite{kennedy:08:wfk}). This is a similar problem to
\eqref{eq:robinproblem}, and we prove analogues of our results for the
Robin problem in this case (see Theorem~\ref{th:wentzellk}). Here we
only consider the case $p=2$; it appears no work has yet been done on
developing a theory of the $p$-Laplacian with boundary conditions
$\Delta_p u + \beta |\nabla u|^{p-2} \frac{\partial u}{\partial \nu} +
\gamma |u|^{p-2}u =0$ on $\partial \Omega$.

Before we proceed, we have a few general remarks.

\begin{remark}
  \label{rem:general}
  (i) We will only consider bounded, Lipschitz domains of fixed volume
  $M>0$ unless otherwise specified, since this is in some sense the
  ``natural'' setting for problems such as \eqref{eq:robinproblem} and
  \eqref{eq:wentzellproblem}, although a solution to
  \eqref{eq:minproblem} could be unbounded or non-Lipschitz.

  (ii) We allow our domains to be disconnected, which is necessary for
  considering problems such as \eqref{eq:minproblem}. We will assume
  throughout that $\Omega$ consists of countably many bounded
  connected components (c.c.s for short), each having Lipschitz
  boundary, and that there exists $\delta>0$ such that the distance
  between any two c.c.s is at least $\delta$. Such domains are
  slightly more general than ``bounded, Lipschitz''. In such a case
  the eigenvalues of $\Omega$ (for any operator or boundary condition)
  can be found by collecting and reordering the eigenvalues of the
  c.c.s.

  (iii) For such domains $U,V$, in a slight abuse of notation we will
  say $U=V$ iff their c.c.s are in bijective correspondence and for
  each pair $\widetilde U, \widetilde V$ of c.c.s, there exists a
  rigid transformation $\tau$ such that $\tau(\widetilde U) =
  \widetilde V$. (Thus their spectra will coincide.)

  (iv) We will always use $\lambda = \lambda_k (\Omega, \alpha)$ to
  stand for an eigenvalue of \eqref{eq:robinproblem}, $\Lambda =
  \Lambda_k (\Omega, \beta, \gamma)$ for \eqref{eq:wentzellproblem},
  although we will drop one or more arguments if there is no danger of
  confusion, and we will denote by $\mu_k = \mu_k (\Omega)$ the $k$th
  eigenvalue of the Dirichlet $p$-Laplacian on $\Omega$. We collect
  some elementary properties of these eigenvalues in the appendix.
\end{remark}

\section{The second eigenvalue of the Robin $p$-Laplacian}
\label{sec:second}

Choose $1<p<\infty$, $\alpha>0$ and $M>0$, which will all be fixed for
this section. Let $\lambda_2(\Omega)$ be the second eigenvalue of
\eqref{eq:robinproblem} on $\Omega$, and let $D_2$ be the disjoint
union of two balls of volume $M/2$ each.

\begin{theorem}
  \label{th:rp2}
  Suppose $\Omega \subset \R^N$ is a domain of volume $M$ satisfying
  the assumptions of Remark~\ref{rem:general}(ii). Then
  $\lambda_2(\Omega) \geq \lambda_2(D_2)$ with equality if and only if
  $\Omega = D_2$ in the sense of Remark~\ref{rem:general}(iii).
\end{theorem}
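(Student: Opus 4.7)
The approach is a Hong--Krahn--Szeg\H{o}-type argument adapted to the Robin $p$-Laplacian. The aim is to produce two disjoint subsets $A, B \subseteq \Omega$ with $|A| + |B| \leq M$ such that both $\lambda_1(A,\alpha)$ and $\lambda_1(B,\alpha)$ (defined via their Robin Rayleigh quotients) are at most $\lambda_2(\Omega)$. Combining this with the sharp Robin Faber--Krahn inequality of \cite{dai:08:rpl,bucur:09:aa} applied to each of $A$ and $B$, and with the strict monotonicity in the ball volume $v$ of $\lambda_1(B_v,\alpha)$ (a direct scaling computation, where $B_v$ denotes any ball of volume $v$), will yield
\[
  \lambda_2(\Omega) \;\geq\; \max\bigl(\lambda_1(B_{|A|},\alpha),\,\lambda_1(B_{|B|},\alpha)\bigr) \;\geq\; \lambda_1(B_{M/2},\alpha) \;=\; \lambda_2(D_2),
\]
the final bound being the minimum of the preceding maximum over $|A|+|B| \leq M$, achieved at $|A|=|B|=M/2$.

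To construct $A$ and $B$, I write $\Omega$ as the disjoint union of its connected components $U_i$, so that the Robin spectrum of $\Omega$ is the reordering of those of the $U_i$. Two cases arise. Case (a): two distinct components $U_i, U_j$ satisfy $\lambda_1(U_i,\alpha),\,\lambda_1(U_j,\alpha) \leq \lambda_2(\Omega)$, and I simply set $A := U_i$, $B := U_j$. Case (b): $\lambda_2(\Omega) = \lambda_2(U,\alpha)$ for a single component $U$, with all other components having strictly larger first eigenvalue. Here I invoke that any non-principal Robin eigenfunction $u$ on $U$ changes sign (recalled from \cite[Section 5.5]{le:06:pl} in the introduction), and set $A := \{u > 0\}$, $B := \{u < 0\}$. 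Multiplying the $p$-Laplacian equation by $u^+ := \max(u,0) \in W^{1,p}(U)$ and integrating by parts over $A$, then using the Robin condition on $\partial A \cap \partial U$ and the vanishing of $u$ on the internal nodal portion of $\partial A$, yields
\[
  \int_A |\nabla u|^p + \alpha \int_{\partial A \cap \partial \Omega} |u|^p \;=\; \lambda_2(\Omega) \int_A |u|^p.
\]
Since $u^+$ also vanishes on $\partial A \setminus \partial \Omega$, inserting $u^+$ into the Robin Rayleigh quotient for $A$ gives exactly $\lambda_1(A,\alpha) \leq \lambda_2(\Omega)$, and symmetrically for $B$.

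The uniqueness claim falls out of tracking equality throughout. Equality in the final chain forces $|A|=|B|=M/2$, so $\Omega$ has no components outside $A \cup B$, and equality in Faber--Krahn forces each of $A$ and $B$ to be a ball. Case (b) above cannot produce equality: in that case the test function $u^+$ would have to be a first Robin eigenfunction on $A$, but any such eigenfunction is strictly sign-constant on $\overline A$, contradicting its vanishing on the non-empty internal nodal portion of $\partial A$. Hence case (a) must hold and $\Omega = D_2$ in the sense of Remark~\ref{rem:general}(iii). The main obstacle throughout is the nonlinearity of the $p$-Laplacian: there is no orthogonality-based spectral decomposition as for $p=2$, so both the sign-change of non-principal eigenfunctions and the strict positivity of first eigenfunctions must be imported as qualitative facts, and the integration-by-parts identity must be justified for the merely $W^{1,p}$ test function $u^+$.
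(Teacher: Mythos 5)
Your overall skeleton (Hong--Krahn--Szeg\H{o}: produce two disjoint sets of first Robin eigenvalue at most $\lambda_2(\Omega)$, then apply Faber--Krahn and ball monotonicity) is the same as the paper's, and your case (a) and the identity
\begin{displaymath}
  \int_A |\nabla u|^p\,dx + \alpha \int_{\partial A \cap \partial \Omega} |u|^p\,d\sigma
  = \lambda_2(\Omega) \int_A |u|^p\,dx
\end{displaymath}
are fine (the latter is obtained by testing the weak formulation on the whole component $U$ with $u^+$, not by integrating by parts over $A$, whose boundary may be too rough for that). The genuine gap is in the very next step of case (b): you apply the Robin Faber--Krahn inequality to the nodal domains $A = \{u>0\}$ and $B = \{u<0\}$. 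These are merely open sets; nothing guarantees they are Lipschitz, and the Robin Faber--Krahn inequality of \cite{bucur:09:aa} is only proved for Lipschitz domains. (For a non-Lipschitz open set it is not even clear how to define the boundary integral $\int_{\partial A}\alpha|\varphi|^p\,d\sigma$ in the Rayleigh quotient, let alone prove the inequality.) This is exactly the obstruction the paper identifies as the reason the Dirichlet-case argument does not transfer, and the bulk of the paper's proof is machinery to get around it: one shows via V\'azquez's Hopf lemma and elliptic regularity that a \emph{piece} $\Gamma$ of the interior nodal boundary is smooth, introduces the auxiliary infimum $\kappa(\Omega^+)$ with a Robin term on $\Gamma$ and Dirichlet conditions on the rest of the internal boundary, and then approximates $\Omega^+$ \emph{from the outside} by genuinely Lipschitz (indeed smooth) domains $U_n$ with $|U_n| \to |\Omega^+|$, to which Faber--Krahn legitimately applies; passing to the limit gives $\kappa(\Omega^+) \geq \lambda_1(B^+)$.

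Your uniqueness argument for case (b) inherits the same problem: you assert that a first Robin eigenfunction of $A$ must be strictly positive on $\overline A$, which again presupposes enough boundary regularity of $A$ for the minimiser of the Rayleigh quotient to satisfy the Robin condition pointwise and for a Hopf-type lemma to apply at $\partial A$. The paper instead obtains \emph{strictness} of $\lambda_2(\Omega) > \kappa(\Omega^+)$ by localising entirely to the provably smooth piece $\Gamma$: if equality held, $\psi$ would satisfy both $\psi = 0$ and $|\nabla\psi|^{p-2}\partial\psi/\partial\nu + \alpha|\psi|^{p-2}\psi = 0$ on $\Gamma$, contradicting $\partial\psi/\partial\nu > 0$ there from V\'azquez's strong maximum principle. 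So while your plan is the right one in spirit, it assumes away the central technical difficulty (regularity of the nodal domains) rather than resolving it.
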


To prove Theorem~\ref{th:rp2} we cannot directly apply the method used
in the Dirichlet case (see for example \cite[Section~4]{henrot:03:min}
and also \cite[Section~2]{kennedy:09:lr2} for when $p=2$; the
arguments are the same when $p \neq 2$) since the nodal domains may
not be smooth enough to apply the Faber-Krahn inequality, which is
only known for Lipschitz domains (see \cite{bucur:09:aa}). The proof
we give is a refinement of that in \cite{kennedy:09:lr2}, which for
$p=2$ constructs an appropriate sequence of approximations to the
nodal domain. A significant additional argument is needed to prove
uniqueness of the minimiser.

\begin{remark}
  \label{rem:rp2}
  When $p=2$, Theorem~\ref{th:rp2} combined with
  \cite[Example~2.2]{kennedy:09:lr2} shows that there is no minimiser
  of $\lambda_2$ amongst all \emph{connected} domains of given volume,
  since we can find a sequence of connected $\Omega_n$ with $\lambda_2
  (\Omega_n) \to \lambda_2 (D_2)$. A similar construct should work
  when $p \neq 2$, but we do not know of domain approximation results
  akin to those in \cite{dancer:97:dpr} for this case.
\end{remark}

Before we proceed with the proof of Theorem~\ref{th:rp2}, we recall
some properties of the eigenvalues and eigenfunctions of the problem
\eqref{eq:robinproblem}. Here for simplicity we will assume $\Omega$
is connected. We understand an eigenvalue $\lambda \in \R$ of
\eqref{eq:robinproblem} with eigenfunction $\psi \in W^{1,p} (\Omega)$
in the weak sense, as a solution of
\begin{equation}
  \label{eq:eigenmeaning}
  \int_\Omega |\nabla \psi|^{p-2}\nabla \psi \cdot \nabla \varphi \,dx
  + \int_{\partial \Omega} \! \alpha |\psi|^{p-2} \psi \varphi \,d\sigma
  = \lambda \int_\Omega |\psi|^{p-2} \psi \varphi \,dx
\end{equation}
for all $\varphi \in W^{1,p}(\Omega)$.

\begin{proposition}
  \label{prop:welldef}
  Suppose $\Omega \subset \R^N$ is a bounded, connected Lipschitz
  domain. Then
  \begin{itemize}
  \item[(i)] there exists a sequence of eigenvalues $(\lambda_n)_{n
      \in \N}$ of \eqref{eq:robinproblem}, obtainable by the
    Ljusternik-Schnirelman (L-S) principle, of the form
    $0<\lambda_1<\lambda_2 \leq \ldots$;
  \item[(ii)] the second L-S eigenvalue satisfies
    \begin{displaymath}
      \lambda_2 = \inf\{ \lambda > \lambda_1: \lambda \text{ is an 
        eigenvalue of \eqref{eq:robinproblem}} \};
    \end{displaymath}
  \item[(iii)] the first eigenvalue $\lambda_1>0$ is simple and every
    eigenfunction $\psi$ associated with $\lambda_1$ satisfies
    $\psi>0$ or $\psi<0$ in $\Omega$;
  \item[(iv)] only eigenfunctions associated with $\lambda_1$ do not
    change sign in $\Omega$;
  \item[(v)] every eigenfunction $\psi$ of \eqref{eq:robinproblem}
    lies in $W^{1,p}(\Omega) \cap C^{1,\eta}(\Omega) \cap C(\overline
    \Omega)$ for some $0<\eta<1$.
  \end{itemize}
\end{proposition}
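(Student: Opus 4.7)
The plan is to treat this proposition as a collection of by-now-standard facts about the Robin $p$-Laplacian that I would assemble by adapting well-established Dirichlet techniques, since \eqref{eq:robinproblem} differs from the Dirichlet $p$-Laplacian essentially only by a nonnegative surface term.

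For (i) I would apply the Ljusternik--Schnirelman (L-S) principle to the even $C^1$ functional $u \mapsto \int_\Omega |\nabla u|^p\,dx + \alpha \int_{\partial\Omega} |u|^p\,d\sigma$ restricted to $\{u \in W^{1,p}(\Omega) : \|u\|_{L^p(\Omega)} = 1\}$; the Palais--Smale condition follows from compactness of the trace $W^{1,p}(\Omega) \hookrightarrow L^p(\partial\Omega)$ on bounded Lipschitz $\Omega$, and the resulting critical values give the L-S sequence, as in \cite[Section~5.5]{le:06:pl}. For (iii) and (iv), my main tool would be the Picone inequality for the $p$-Laplacian: with $\psi_1 > 0$ a first eigenfunction (existence from the Rayleigh-quotient minimum, strict positivity from Moser iteration and V\'azquez's strong maximum principle) and $\psi$ any sign-definite eigenfunction at eigenvalue $\lambda$, testing \eqref{eq:eigenmeaning} with $\varphi = \psi_1^p / \psi^{p-1}$ and manipulating via Picone yields $\lambda \geq \lambda_1$ with equality forcing $\psi$ to be a scalar multiple of $\psi_1$; this gives both (iii) and (iv) at once. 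Finally, (v) is classical: interior $C^{1,\eta}$ regularity by DiBenedetto and boundary $C^{1,\eta}$ regularity by Lieberman, both of which extend to the Robin boundary condition, together with continuity on $\overline{\Omega}$ by Moser iteration.

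Part (ii) is the most delicate, since it asserts that $\lambda_1$ is isolated from above. I would argue by contradiction: suppose there exist eigenvalues $\mu_n \downarrow \lambda_1$ with $L^p$-normalised eigenfunctions $\phi_n$. By (iv) each $\phi_n$ changes sign, so both nodal sets $\Omega_n^\pm := \{\pm \phi_n > 0\}$ are nonempty. Testing \eqref{eq:eigenmeaning} with $\phi_n^\pm$ expresses $\mu_n$ as a mixed Dirichlet--Robin Rayleigh quotient on $\Omega_n^\pm$; a Faber--Krahn-type comparison, using monotonicity of the first eigenvalue in the boundary condition together with the Robin Faber--Krahn of \cite{kennedy:08:wfk,bucur:09:aa,dai:08:rpl}, forces $|\Omega_n^\pm|$ to be bounded below uniformly in $n$. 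But compact embedding gives a subsequence with $\phi_n \to \phi$ in $L^p$, where $\phi$ is an eigenfunction of $\lambda_1$ and hence sign-definite by (iii) --- contradicting the uniform measure bound on both nodal sets.

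I expect (ii) to be the main obstacle, as it requires care with the mixed Dirichlet--Robin problem arising on the nodal sets and the precise form of Faber--Krahn applied there; the remaining items reduce to citations or minor adaptations of standard Dirichlet techniques.
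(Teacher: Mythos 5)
The paper does not reprove these facts: parts (i)--(iv) are taken from L\^e \cite{le:06:pl} with the observation that the regularity hypotheses there can be relaxed to Lipschitz, and (v) is assembled from \cite{daners:09:ql} ($L^\infty$ bounds), \cite{ladyzhenskaya:68:lqe} (H\"older continuity up to a Lipschitz boundary) and \cite{tolksdorf:84:rqe} (interior $C^{1,\eta}$). Your reconstruction of (i), (iii), (iv) via the L-S principle and the Picone identity is a legitimate alternative route and would work with the usual regularisation $\psi+\epsilon$ in the Picone test function; for (v), note that Lieberman's boundary $C^{1,\eta}$ theory needs a $C^{1,\alpha}$ boundary and is unavailable here, but it is also not claimed --- only $C(\overline\Omega)$ is asserted at the boundary, and for that plain Moser iteration gives only $L^\infty$; you need the De Giorgi--Nash--Moser oscillation estimates valid under the measure-density condition of Lipschitz domains, which is exactly what the citation to \cite[pp.~466--7]{ladyzhenskaya:68:lqe} supplies.

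The genuine gap is in (ii), which you have misread as the assertion that $\lambda_1$ is isolated from above. Isolation is only half of the statement. Your contradiction argument (correct in outline, and essentially the Anane-type argument: a Sobolev/Faber--Krahn lower bound on the measure of both nodal sets of $\phi_n$ versus $L^p$-convergence to a sign-definite limit) shows that $\inf\{\lambda>\lambda_1:\lambda\text{ is an eigenvalue}\}>\lambda_1$. It does \emph{not} show that this infimum equals the second L-S minimax value $\lambda_2$. For that you must rule out eigenvalues in the open interval $(\lambda_1,\lambda_2)$: given any eigenvalue $\lambda>\lambda_1$ with eigenfunction $u$, which changes sign by (iv), one tests with $u^\pm$ to see that the Rayleigh quotient equals $\lambda$ on both truncations, and then the symmetric set
\begin{displaymath}
  A:=\Bigl\{\,\frac{au^+ - bu^-}{\|au^+ - bu^-\|_{L^p(\Omega)}} : (a,b)\in\R^2\setminus\{0\}\Bigr\},
\end{displaymath}
homeomorphic to $S^1$ and hence of genus $2$, satisfies $Q_p\leq\lambda$ on all of $A$ because $u^+$ and $u^-$ have disjoint supports; this forces $\lambda_2\leq\lambda$ by the minimax characterisation. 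Without this construction you only obtain $\lambda_2\geq\inf\{\lambda>\lambda_1\}$ (and even the strict inequality $\lambda_2>\lambda_1$ needed in (i) requires a separate genus/deformation argument using the simplicity of $\lambda_1$), so the stated equality in (ii) is not established by your proposal as written.
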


\begin{proof}
  Parts (i)-(iv) are essentially contained in \cite{le:06:pl}.
  Although $C^1$ regularity of $\Omega$ is assumed there in order to
  derive (i) and $C^{1,\theta}$, $0<\theta<1$, is assumed for
  (ii)-(iv), a careful analysis of the proofs shows that only
  Lipschitz continuity of $\partial \Omega$ is needed, since all
  background results, including those in the appendices, are valid for
  Lipschitz domains. (The extra regularity of $\partial \Omega$ is
  needed only to prove extra boundary regularity of the
  eigenfunctions.) For (v), first note that by
  \cite[Theorem~2.7]{daners:09:ql}, every eigenfunction $\psi \in
  L^\infty (\Omega)$ (see also Section~4 there). But now, as noted in
  \cite[Section~2]{bucur:09:aa}, the arguments in
  \cite[pp.~466-7]{ladyzhenskaya:68:lqe} imply that $\psi$ is H\"older
  continuous on $\overline \Omega$. Also, by \cite{tolksdorf:84:rqe},
  $\nabla \psi$ is H\"older continuous inside $\Omega$.
\end{proof}

To prove Theorem~\ref{th:rp2}, we first reduce to the case that
$\Omega$ is connected. For, suppose Theorem~\ref{th:rp2} holds for
connected domains, and that $\Omega \neq D_2$ is not connected. There
are two possibilities: either $\lambda_2 (\Omega) = \lambda_2
(\widetilde \Omega)$ for some c.c.~$\widetilde \Omega$
of $\Omega$, or else there exist c.c.s $\Omega'$,
$\Omega''$ such that $\lambda_1 (\Omega) = \lambda_1 (\Omega')$,
$\lambda_2 (\Omega) = \lambda_1 (\Omega')$. In the former case, if we
let $\widetilde D_2$ be a scaled down version of $D_2$ with
$|\widetilde D_2| = |\widetilde \Omega|$, since $\widetilde \Omega$ is
connected we may apply Theorem~\ref{th:rp2} to get
$\lambda_2(\widetilde \Omega) > \lambda_2 (\widetilde D_2) \geq
\lambda_2 (D_2)$, where for the last step we have used
Lemma~\ref{lemma:ball}. In the latter case, let $B'$, $B''$ be balls
having the same volume as $\Omega'$, $\Omega''$, respectively. Then by
the Faber-Krahn inequality \cite[Theorem~1.1]{bucur:09:aa}, $\lambda_2
(\Omega) \geq \max \{\lambda_1(\Omega'), \lambda_1(\Omega'')\} \geq
\max \{\lambda_1(B'), \lambda_1 (B'')\}$, and the latter maximum is
minimised when $\lambda_1 (B') = \lambda_1 (B'') = \lambda_2(D_2)$.
Finally, if $\lambda_2 (\Omega) = \lambda_2 (D_2)$ then equality
everywhere in the above argument implies $|\Omega'| = |\Omega''| =
M/2$ (also using strict monotonicity in Lemma~\ref{lemma:ball}) and
sharpness of the Faber-Krahn inequality
\cite[Theorem~1.1]{bucur:09:aa} implies $\Omega' = B'$, $\Omega'' =
B''$; that is, $\Omega = D_2$.

So now suppose $\Omega$ is connected, and let $\psi \in
W^{1,p}(\Omega) \cap C(\overline \Omega)$ be any eigenfunction
associated with $\lambda_2 (\Omega)$. Since $\psi$ must change sign in
$\Omega$, the nodal domains $\Omega^+ := \{x \in \Omega: \psi(x)>0\}$
and $\Omega^- := \{x \in \Omega: \psi(x)<0\}$ are both nonempty and
open. Set $\psi^+:= \max\{\psi, 0\}$, $\psi^- := \max\{-\psi, 0\}$;
then we have $\psi^+, \psi^- \in W^{1,p} (\Omega) \cap C(\overline
\Omega)$, and
\begin{displaymath}
  \nabla \psi^+ =
  \begin{cases}
    \nabla \psi \qquad &\text{if $\psi>0$}\\
    0 &\text{if $\psi \leq 0$},
  \end{cases}
\end{displaymath}
with an analogous formula for $\nabla \psi^-$ (see
\cite[Lemma~7.6]{gilbarg:83:pde}).

Let $B^+$, $B^-$ be balls having the same volume as $\Omega^+$,
$\Omega^-$ respectively. We will show that $\lambda_2(\Omega) > \max
\{\lambda_1(B^+), \lambda_1(B^-)\}$. By Lemma~\ref{lemma:ball} this
maximum is minimal when $B^+ = B^-$ and $\lambda_1 (B^+)= \lambda_1
(B^-) = \lambda_2 (D_2)$. Without loss of generality we only consider
$\Omega^+$.  Let $\partial_e \Omega^+ := \partial \Omega^+ \cap
\partial \Omega$ and $\partial_i \Omega^+ := \partial \Omega^+ \cap
\Omega = \partial \Omega^+ \setminus \partial_e \Omega^+$ denote the
exterior and interior parts of the boundary of $\Omega^+$,
respectively (note that $\partial_i \Omega^+$ will not be closed). We
first show that a piece of $\partial_i \Omega^+$ must be smooth.

\begin{lemma}
  \label{lemma:smoothpiece}
  There exist $x_0 \in \Omega$ and $r>0$ such that $\psi(x_0) = 0$,
  $B(x_0, r) \subset \! \subset \Omega$, $\nabla \psi(x) \neq 0$ for
  all $x \in B(x_0, r)$, $\psi \in C^\infty (B(x_0,r))$ and $\{ x \in
  B(x_0, r): \psi(x) = 0\}$ is a surface of class $C^\infty$.
\end{lemma}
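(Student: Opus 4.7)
The plan is to first locate a single point $x_0 \in \Omega$ with $\psi(x_0) = 0$ and $\nabla\psi(x_0) \neq 0$; all remaining conclusions then follow by standard regularity theory. Continuity of $\nabla\psi$ (Proposition~\ref{prop:welldef}(v)) supplies a ball $B(x_0, r) \Subset \Omega$ on which $|\nabla\psi|$ is bounded above and below away from $0$; on such a ball the coefficient $|\nabla\psi|^{p-2}$ is smooth, so \eqref{eq:robinproblem} becomes a uniformly elliptic quasilinear equation with smooth structure. Iterating Schauder estimates then yields $\psi \in C^\infty(B(x_0,r))$, and the implicit function theorem exhibits $\{\psi=0\}\cap B(x_0,r)$ as a $C^\infty$ hypersurface.

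To produce the regular point I exploit the sign change of $\psi$. Choose $x^+ \in \Omega^+$, $x^- \in \Omega^-$ and, using connectedness of $\Omega$, join them by a smooth path $\gamma: [0,1] \to \Omega$. Then $f(t) := \psi(\gamma(t))$ is in $C^{1,\eta}([0,1])$ with $f(0) > 0 > f(1)$; its first zero $t^* := \inf\{t : f(t) \leq 0\}$ satisfies $f(t^*) = 0$. If $\nabla\psi(\gamma(t^*)) \neq 0$, I take $x_0 := \gamma(t^*)$ and am done. Otherwise $\gamma(t^*)$ is a critical zero of $\psi$ and I have to perturb~$\gamma$.

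To make some perturbation succeed, I need to rule out the pathological scenario that $\nabla\psi$ vanishes on the entire nodal set. The key observation is that $\{\nabla\psi = 0\} \cap \Omega$ has empty interior: if $\nabla\psi \equiv 0$ on an open $V \subset \Omega$, then $\psi$ is locally constant on $V$; inserting this into \eqref{eq:robinproblem} and using $\lambda > 0$ forces the constant to be $0$, so $\psi \equiv 0$ on $V$, contradicting non-triviality of the eigenfunction (via unique continuation, or by propagating the zero through the weak form \eqref{eq:eigenmeaning}). Hence $\{\nabla\psi \neq 0\}$ is open and dense in $\Omega$. A compactness/limiting argument --- varying the endpoints $x^{\pm}$ or reshaping $\gamma$ so that the crossing parameter $\gamma(t^*)$ falls in this dense regular set --- should then produce the required transversal zero.

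The principal obstacle is precisely this last step. For $p=2$ real-analyticity of eigenfunctions forces $\{\nabla\psi = 0\} \cap \{\psi = 0\}$ to have Hausdorff dimension at most $N-2$, so regular zeros are dense within the nodal set and the lemma is essentially immediate; indeed this is presumably how the corresponding statement in \cite{kennedy:09:lr2} is proved. For $p \neq 2$ only $C^{1,\eta}$ regularity in the interior is available, and no analogous structural information about the nodal set is at hand, so more care is needed. I expect that the cleanest version will combine density of $\{\nabla\psi \neq 0\}$ with the fact that $\psi$ changes sign along \emph{every} continuous path from $\Omega^+$ to $\Omega^-$, to show that some path must cross the nodal set non-critically.
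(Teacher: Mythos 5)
Your second half (once a zero $x_0$ with $\nabla\psi(x_0)\neq 0$ is found, freeze the coefficient $a(x)=|\nabla\psi(x)|^{p-2}$ on a small ball where $|\nabla\psi|$ is bounded below, bootstrap to $C^\infty$ and apply the implicit function theorem) is exactly what the paper does. The gap is in the first half, and you have in effect diagnosed it yourself: for $p\neq 2$ you have no structural information about the nodal set, and density of the open set $\{\nabla\psi\neq 0\}$ in $\Omega$ says nothing about whether it meets the closed, nowhere dense set $\{\psi=0\}$ --- a dense open set can entirely miss a given nowhere dense closed set. So the ``compactness/limiting argument'' you defer to is precisely the missing content, and the path-perturbation route does not obviously close it. (As an aside, your claim that $\{\nabla\psi=0\}$ has empty interior leans on unique continuation for the $p$-Laplacian, which is itself delicate for $p\neq 2$; fortunately none of this is needed.)

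The paper's idea, which sidesteps all of this, is a Hopf boundary-point argument at an interior tangent ball. Pick $x\in\Omega^+$ close to $\partial_i\Omega^+$ and let $\delta_0$ be the supremal radius with $B(x,\delta_0)\subset\Omega^+$; then $\partial B(x,\delta_0)$ touches $\partial_i\Omega^+$ at some $x_0$. Since $\psi>0$ in $B(x,\delta_0)$, $\psi(x_0)=0$, and $\psi\in C^1$ up to the closed ball, V\'azquez's strong maximum principle for the $p$-Laplacian (\cite[Theorem~5]{vazquez:84:smp}) yields $\frac{\partial\psi}{\partial\nu_B}(x_0)<0$, hence $\nabla\psi(x_0)\neq 0$. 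This replaces any global statement about the critical set by a one-point conclusion at a geometrically chosen point of the nodal set, and is exactly the tool your outline is missing; the rest of your argument then goes through unchanged.
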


\begin{proof}
  We first show we can find $x_0 \in \partial_i \Omega^+$ with $\nabla
  \psi(x) \neq 0$ in a neighbourhood of $x_0$. Choose any $x \in
  \Omega^+$ close to $\partial_i \Omega^+$ and let $\delta_0 := \inf
  \{ \delta > 0: \partial B(x, \delta) \cap \partial_i \Omega^+ \neq
  \emptyset \}$. Then $B(x, \delta_0) \subset \Omega^+$ but there
  exists $x_0 \in \partial B(x, \delta_0) \cap \partial_i \Omega^+$.

  We now apply a version of Hopf's Lemma for the $p$-Laplacian due to
  V{\'a}zquez. Since $\psi(x_0) = 0$, $\psi(x)>0$ in $B(x, \delta_0)$
  and $\psi \in C^1(\overline{B(x, \delta_0)})$, by
  \cite[Theorem~5]{vazquez:84:smp} we have $\frac{\partial
    \psi}{\partial \nu_B} (x_0) < 0$, where $\nu_B$ is the outer unit
  normal to $B(x, \delta_0)$.  Hence $\nabla \psi (x_0) \neq 0$, and
  so by continuity of $\nabla \psi$ there exists a neighbourhood $V_0$
  of $x_0$ and $m>0$ such that $|\nabla \psi(x)| \geq m$ for all $x
  \in V_0$. In particular, inside $V_0$ we may write $-\Delta_p \psi =
  -\divergence (a(x) \nabla \psi)$, where $a(x) = |\nabla
  \psi(x)|^{p-2} \geq m^{p-2}>0$.  Since $\psi \in
  C^1(\overline{V_0})$ is an eigenfuction of the operator
  $-\divergence (a(x) \nabla u)$, a standard bootstrapping argument
  using elliptic regularity theory yields $\psi \in C^\infty (V_0)$.
  By the implicit function theorem it follows that the level surface
  $\{ \psi = 0 \}$ is locally the graph of a $C^\infty$ function
  inside $V_0$.
\end{proof}

Fix $x_0$ and $r$ as in the lemma and set $\Gamma := \partial_i
\Omega^+ \cap B(x_0, r/2)$ smooth; then the surface measure
$\sigma(\Gamma)>0$. We will impose Robin boundary conditions on
$\Gamma$, strictly lowering the first eigenvalue of a suitable
variational problem on $\Omega^+$. To that end set $V_0:= \{ \varphi
\in W^{1,p}(\Omega^+) \cap C(\overline{\Omega^+}): \varphi = 0 \text{
  on } \partial_i \Omega^+ \setminus \Gamma \}$, for $\varphi \in V_0$
set
\begin{equation}
  \label{eq:q}
  Q_p (\varphi) := \frac{\int_{\Omega^+} |\nabla \varphi|^p\,dx + 
    \int_{\partial_e \Omega^+ \cup \Gamma}  \alpha |\varphi|^p
    \,d\sigma}{\int_{\Omega^+} |\varphi|^p\,dx}
\end{equation}
and let
\begin{equation}
  \label{eq:kappa}
  \kappa (\Omega^+) := \inf_{\varphi \in V_0} Q_p (\varphi)
\end{equation}
We may characterise $\lambda_2(\Omega)$ as follows. In an abuse of
notation we will not distinguish between $\psi^+$ on $\Omega$ and
$\psi^+|_{\Omega^+}$.

\begin{lemma}
  \label{lemma:pchar}
  We have $\psi^+ \in V_0$ and
  \begin{equation}
    \label{eq:pq}
    \lambda_2 (\Omega) = Q_p(\psi) = Q_p (\psi^+) \equiv 
    \frac{\int_{\Omega^+} |\nabla \psi^+|^p\,dx + \int_{\partial_e 
        \Omega^+}\alpha|\psi^+|^p\,d\sigma}{\int_{\Omega^+} |\psi^+|^p\,dx}.
  \end{equation}
\end{lemma}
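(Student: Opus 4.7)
The plan is to verify the membership claim and the two Rayleigh quotient identities in order, with the core calculation being a straightforward substitution into the weak formulation \eqref{eq:eigenmeaning}.

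First I would establish $\psi^+\in V_0$. Since $\psi\in W^{1,p}(\Omega)\cap C(\overline\Omega)$ by Proposition~\ref{prop:welldef}(v), the truncation formula for $\nabla\psi^+$ recalled just before the lemma statement (from \cite[Lemma~7.6]{gilbarg:83:pde}) gives $\psi^+\in W^{1,p}(\Omega)\cap C(\overline\Omega)$, and restriction to $\Omega^+$ puts $\psi^+\in W^{1,p}(\Omega^+)\cap C(\overline{\Omega^+})$. For the boundary vanishing condition, any $x\in\partial_i\Omega^+\subset\Omega$ is a limit of points in $\Omega^+$ (where $\psi>0$) and of points outside $\Omega^+$ (where $\psi\leq 0$), so continuity forces $\psi(x)=0$ and hence $\psi^+(x)=0$ on all of $\partial_i\Omega^+$, in particular on $\partial_i\Omega^+\setminus\Gamma$.

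Next, interpreting $Q_p(\psi)$ as the natural Robin Rayleigh quotient on $\Omega$ (with $\Omega^+$ replaced by $\Omega$, and no $\Gamma$ term), the identity $\lambda_2(\Omega)=Q_p(\psi)$ is immediate by testing $\varphi=\psi$ in \eqref{eq:eigenmeaning} and dividing by $\int_\Omega|\psi|^p\,dx$. The central identity $Q_p(\psi^+)=\lambda_2(\Omega)$ is obtained by testing $\varphi=\psi^+\in W^{1,p}(\Omega)$ in \eqref{eq:eigenmeaning}. Using $\nabla\psi^+=\nabla\psi\cdot\chi_{\{\psi>0\}}$ a.e., the bulk gradient term collapses to $\int_{\Omega^+}|\nabla\psi^+|^p\,dx$, and $|\psi|^{p-2}\psi\,\psi^+=|\psi^+|^p$ pointwise a.e., so the right-hand side becomes $\lambda_2(\Omega)\int_{\Omega^+}|\psi^+|^p\,dx$. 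For the boundary term, the same identity $|\psi|^{p-2}\psi\,\psi^+=|\psi^+|^p$ holds in the trace sense on $\partial\Omega$, so the boundary integral reduces to $\int_{\partial\Omega}\alpha|\psi^+|^p\,d\sigma$, and this agrees with $\int_{\partial_e\Omega^+}\alpha|\psi^+|^p\,d\sigma$ because $\psi^+$ vanishes on the part of $\partial\Omega$ where $\psi\leq 0$. Finally, since $\Gamma\subset\partial_i\Omega^+$ and $\psi^+=0$ there, the $\Gamma$ contribution in the definition \eqref{eq:q} of $Q_p$ drops out, yielding precisely the displayed form of $Q_p(\psi^+)$.

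The main obstacle, such as it is, lies in the boundary manipulation: one must justify that the trace of $\psi^+$ on $\partial\Omega$ is compatible with its continuous representative, and that the support set $\{\psi>0\}\cap\partial\Omega$ coincides with $\partial_e\Omega^+$ up to a surface-measure-zero set; the Lipschitz regularity of $\partial\Omega$ together with the H\"older continuity of $\psi$ on $\overline\Omega$ from Proposition~\ref{prop:welldef}(v) makes this routine. The remainder of the argument is algebraic bookkeeping with the truncation identity and the weak eigenvalue equation.
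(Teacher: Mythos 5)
Your proof is correct and follows essentially the same route as the paper: test $\varphi=\psi^+$ in \eqref{eq:eigenmeaning}, use the truncation identities $|\nabla\psi|^{p-2}\nabla\psi\cdot\nabla\psi^+=|\nabla\psi^+|^p$ and $|\psi|^{p-2}\psi\,\psi^+=|\psi^+|^p$, and then localise all integrals to $\Omega^+$ and $\partial_e\Omega^+$ using the support of $\psi^+$. The only (harmless) discrepancy is your reading of $Q_p(\psi)$ as the full Rayleigh quotient on $\Omega$; in the paper $Q_p$ always denotes the quotient \eqref{eq:q} over $\Omega^+$ with boundary term on $\partial_e\Omega^+\cup\Gamma$, so $Q_p(\psi)=Q_p(\psi^+)$ simply because $\psi\equiv\psi^+$ on $\overline{\Omega^+}$ (both vanish on $\Gamma$), which your argument covers anyway.
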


\begin{proof}
  We already know $\psi^+ \in V_0$, since $\psi^+ \in W^{1,p}(\Omega)
  \cap C(\overline \Omega)$ is zero on $\partial_i \Omega^+$. To
  obtain \eqref{eq:pq}, choose $\psi^+ $ as a test function in the
  characterisation \eqref{eq:eigenmeaning} of $\lambda_2(\Omega)$.
  Then $|\nabla \psi|^{p-2} \nabla \psi \cdot \nabla \psi^+ = |\nabla
  \psi^+|^p$ in $\Omega$ and $|\psi|^{p-2} \psi \, \psi^+ =
  |\psi^+|^p$ pointwise in $\overline \Omega$. Since $\| \psi^+ \|_p^p
  \neq 0$,
  \begin{equation}
    \label{eq:usualchar}
    \lambda_2 (\Omega) = \frac{\int_\Omega |\nabla \psi^+|^p\,dx
      + \int_{\partial \Omega} \alpha |\psi^+|^p\,d\sigma}
    {\int_\Omega |\psi^+|^p\,dx}.
  \end{equation}
  Now \eqref{eq:pq} follows since $\{x \in \Omega: \psi^+(x) \neq
  0\},\, \{x \in \Omega: \nabla \psi^+(x) \neq 0\} \subset \Omega^+$,
  and the boundary integrand $\alpha |\psi^+|^p$ in
  \eqref{eq:usualchar} is nonzero only on $\partial_e \Omega^+$.
  Finally, $Q_p (\psi) = Q_p (\psi^+)$ is obvious since $\psi \equiv
  \psi^+$ on $\Omega^+ \cup \partial_e \Omega^+$.
\end{proof}

\begin{lemma}
  \label{lemma:kappabound}
  $\lambda_2 (\Omega) > \kappa (\Omega^+)$.
\end{lemma}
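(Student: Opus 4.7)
The weak inequality $\kappa(\Omega^+)\le\lambda_2(\Omega)$ is automatic, since $\psi^+\in V_0$ and $Q_p(\psi^+)=\lambda_2(\Omega)$ by Lemma~\ref{lemma:pchar}. All the content of the lemma therefore lies in the strict inequality, and my strategy is to perturb $\psi^+$ along a non-negative bump supported in the smooth region $B(x_0,r)$ provided by Lemma~\ref{lemma:smoothpiece}, and to show that the first variation of $Q_p$ in this direction is strictly negative.

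Concretely, I would fix a non-negative $\eta\in C_c^\infty(B(x_0,r/2))$ with $\eta(x_0)>0$ and set $\varphi_\epsilon:=\psi^++\epsilon\eta$. Because $B(x_0,r/2)\subset\!\subset\Omega$ and $\operatorname{supp}\eta\subset B(x_0,r/2)$, the function $\eta$ vanishes on a neighbourhood of $\partial_e\Omega^+$ and also on $\partial_i\Omega^+\setminus\Gamma$, so $\varphi_\epsilon\in V_0$ for every $\epsilon\ge0$. The plan is then to prove $\frac{d}{d\epsilon}Q_p(\varphi_\epsilon)\bigr|_{\epsilon=0}<0$; this will yield $\kappa(\Omega^+)\le Q_p(\varphi_\epsilon)<Q_p(\psi^+)=\lambda_2(\Omega)$ for all sufficiently small $\epsilon>0$.

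Writing $Q_p=N/D$ as in \eqref{eq:q}, the boundary part of $N'(0)$ (from the term $\int_{\partial_e\Omega^+\cup\Gamma}\alpha|\varphi|^p$) vanishes because $\eta\equiv 0$ on $\partial_e\Omega^+$ and $\psi^+\equiv 0$ on $\Gamma$. For the Dirichlet part $p\int_{\Omega^+}|\nabla\psi^+|^{p-2}\nabla\psi^+\cdot\nabla\eta$ I would invoke Lemma~\ref{lemma:smoothpiece} to integrate by parts on the smooth open set $\Omega^+\cap B(x_0,r/2)$; there $\psi^+=\psi$ is $C^\infty$ with $\nabla\psi\ne 0$, so $|\nabla\psi|^{p-2}$ is bounded and smooth, and the pointwise equation $-\Delta_p\psi=\lambda_2\psi^{p-1}$ holds. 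The interior term $p\lambda_2\int_{\Omega^+}(\psi^+)^{p-1}\eta$ produced this way cancels $\lambda_2 D'(0)$ exactly, leaving
\begin{equation*}
\frac{d}{d\epsilon}Q_p(\varphi_\epsilon)\Bigr|_{\epsilon=0}=\frac{p}{D(\psi^+)}\int_\Gamma |\nabla\psi|^{p-2}\,\frac{\partial\psi}{\partial\nu^+}\,\eta\,d\sigma,
\end{equation*}
where $\nu^+$ is the outward unit normal to $\Omega^+$ along $\Gamma$.

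The main obstacle — and the decisive point — is the sign of this boundary integral. Because $\psi>0$ in $\Omega^+$ and $\psi=0$ on $\Gamma$, applying the Hopf--V\'azquez principle \cite[Theorem~5]{vazquez:84:smp} (exactly as in the proof of Lemma~\ref{lemma:smoothpiece}) to $\psi$ on a small interior ball $B\subset\Omega^+$ tangent to $\Gamma$ at $x_0$ forces $\partial_{\nu^+}\psi(x_0)<0$. By continuity of $\nabla\psi$ in $B(x_0,r)$, this strict sign persists on a relative neighbourhood of $x_0$ in $\Gamma$; arranging the support of $\eta$ to lie inside this neighbourhood and using $\eta(x_0)>0$ makes the boundary integral strictly negative. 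This gives the desired strict inequality, and hence the lemma.
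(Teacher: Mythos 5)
Your argument is correct, and it reaches the strict inequality by a genuinely more direct route than the paper. The paper argues by contradiction: assuming $\lambda_2(\Omega)=\kappa(\Omega^+)$, it exploits the fact that $\psi$ then minimises $Q_p$ over \emph{all} of $V_0$ to derive the weak Euler--Lagrange identity, localises to a Lipschitz set $U$ with $\Gamma\subset\partial U$, integrates by parts, and uses density of $C_c^\infty(U\cup\Gamma)$ in $L^q(\Gamma)$ to conclude that $\psi$ satisfies the pointwise Robin condition $|\nabla\psi|^{p-2}\frac{\partial\psi}{\partial\nu}+\alpha|\psi|^{p-2}\psi=0$ on $\Gamma$; since $\psi=0$ there this forces $\frac{\partial\psi}{\partial\nu}=0$ on $\Gamma$, contradicting Hopf--V\'azquez. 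You instead test the infimum defining $\kappa(\Omega^+)$ with the single family $\psi^++\epsilon\eta$ and show the one-sided derivative of $Q_p$ at $\epsilon=0$ is strictly negative; the same two ingredients (interior smoothness of $\psi$ near $\Gamma$ from Lemma~\ref{lemma:smoothpiece}, and the sign of the normal derivative on $\Gamma$) do the work, but you avoid the contradiction set-up and the density/pointwise-boundary-condition step, since one well-chosen direction of descent suffices. The computation checks out: $\varphi_\epsilon\in V_0$ because $\supp\eta\subset\!\subset B(x_0,r/2)$ forces $\eta=0$ on $\partial_e\Omega^+$ and on $\partial_i\Omega^+\setminus\Gamma$; the boundary contribution to $N'(0)$ vanishes since $|\psi^+|^{p-2}\psi^+=0$ on $\Gamma$ (using $p>1$); and the interior term cancels $\lambda_2 D'(0)$ exactly. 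Two points worth making explicit in a final write-up: for $1<p<2$ the differentiability of $\epsilon\mapsto N(\varphi_\epsilon)$ needs the remark that $v\mapsto|v|^p$ is $C^1$ on $\R^N$ (with gradient $p|v|^{p-2}v$ extended by $0$) together with dominated convergence; and the divergence theorem is best applied to the vector field $|\nabla\psi|^{p-2}\eta\nabla\psi$, smooth and compactly supported in $B(x_0,r/2)$, on the set $\{\psi>0\}\cap B(x_0,r')$ for some $r/2<r'<r$, so that only the smooth piece of the boundary contributes and no regularity question arises where $\Gamma$ meets the sphere. Finally, your second appeal to V\'azquez's theorem is not strictly needed: once Lemma~\ref{lemma:smoothpiece} gives $\nabla\psi\neq0$ throughout $B(x_0,r)$, the gradient is normal to the level surface $\{\psi=0\}$ and points into $\Omega^+$, so $\frac{\partial\psi}{\partial\nu^+}<0$ on all of $\Gamma$ automatically.
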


\begin{proof}
  It is immediate from Lemma~\ref{lemma:pchar} and \eqref{eq:kappa}
  that $\lambda_2 (\Omega) \geq \kappa (\Omega^+)$.  Suppose for a
  contradiction that we have equality. Then since $\lambda_2 (\Omega)$
  and $\psi$ satisfy \eqref{eq:kappa}, we may also characterise them
  by
  \begin{displaymath}
    \begin{split}
      \int_{\Omega^+}|\nabla \psi|^{p-2} \nabla\psi\cdot\nabla\varphi\,dx
      &+ \int_{\partial_e \Omega^+ \cup \Gamma}\!\!
      \alpha |\psi|^{p-2}\psi \varphi\,d\sigma\\ &= \lambda_2(\Omega)
      \int_{\Omega^+}|\psi|^{p-2} \psi \varphi\,dx
    \end{split}
  \end{displaymath}
  for all $\varphi \in V_0$. (This can be seen, for example, by solving
  \begin{displaymath}
    \frac{d}{dt}\Bigl(\frac{\int_{\Omega^+}|\nabla(\psi-t\varphi)|^p\,dx
      +\int_{\partial_e \Omega^+ \cup \Gamma} \alpha|\psi-t \varphi|^p\,
      d\sigma}{\int_{\Omega^+}|\psi-t\varphi|^p\,dx}\Bigr)\Big|_{t=0} = 0,
  \end{displaymath}
  where $t \in \R$ and $\varphi \in V_0$.)

  Now recall $\partial_i \Omega^+$ is smooth in an open neighbourhood
  $B(x_0,r)$ of $\overline \Gamma$. We can choose an open set $U
  \subset \Omega^+$ Lipschitz with $U \subset\!\subset B(x_0, r)$ and
  such that $\Gamma \subset \partial U$. Then we may extend any
  $\varphi \in C_c^\infty (U \cup \Gamma)$ by zero to obtain an
  element of $V_0$, and so
  \begin{displaymath}
    \int_U |\nabla \psi|^{p-2} \nabla\psi\cdot\nabla\varphi \, dx+
    \int_{\Gamma} \alpha |\psi|^{p-2}
    \psi \varphi\,d\sigma = \lambda_2(\Omega) \int_U |\psi|^{p-2} 
    \psi \varphi\,dx.
  \end{displaymath}
  for any $\varphi \in C_c^\infty (U \cup \Gamma)$. Also, since $\psi
  \in C^\infty (\overline U)$ by Lemma~\ref{lemma:smoothpiece}, we see
  $-\Delta_p \psi = \lambda_2(\Omega) |\psi|^{p-2} \psi$ pointwise in
  $U$. Multiplying through by $\varphi \in C_c^\infty (U \cup
  \Gamma)$, a simple calculation gives
  \begin{displaymath}
    \int_U |\nabla \psi|^{p-2} \nabla\psi\cdot\nabla\varphi -
    \divergence(|\nabla \psi|^{p-2} \varphi \nabla \psi)\,dx
    = \lambda_2(\Omega) \int_U |\psi|^{p-2} \psi \varphi\,dx,
  \end{displaymath}
  which is valid since $\psi \in C^\infty (\overline U)$.  Applying
  the divergence theorem on $U$ (see for example
  \cite[Section~5.8]{evans:92:mt}) and comparing the above identities,
  \begin{displaymath}
    \int_{\Gamma} \alpha |\psi|^{p-2} \psi \varphi \,d\sigma
    = - \int_{\Gamma} |\nabla \psi|^{p-2}\frac{\partial\psi}
    {\partial \nu} \varphi\,d\sigma
  \end{displaymath}
  for all $\varphi \in C_c^\infty (U \cup \Gamma)$, where $\nu$ is the
  outward pointing unit normal to $U$ (equivalently, $\Omega^+$) on
  $\Gamma$.  Since $C_c^\infty (U \cup \Gamma)$ is dense in $L^q
  (\Gamma)$ for all $1<q<\infty$, it follows that $\psi \in
  C^1(\overline U)$ satisfies the boundary condition $|\nabla
  \psi|^{p-2}\frac{\partial \psi}{\partial \nu} + \alpha
  |\psi|^{p-2}\psi = 0$ pointwise in $\Gamma$. But we know $\psi = 0$
  on $\Gamma$, while by Hopf's Lemma \cite[Theorem~5]{vazquez:84:smp}
  applied to $U$ and $\psi \in C^1(\overline U)$, we have
  $\frac{\partial \psi}{\partial \nu} > 0$ (and $|\nabla \psi|>0$) on
  $\Gamma$, a contradiction.
\end{proof}

We will now construct a sequence of smooth domains $U_n$ approximating
$\Omega^+$ from the outside, in order to overcome the possible lack of
overall smoothness of $\partial \Omega^+$. As in
\cite[Section~3]{kennedy:09:lr2}, we attach a ``strip'' near $\partial
\Omega$ to $\Omega^+$ to avoid the points where $\partial_e \Omega^+$
and $\partial_i \Omega^+$ meet. Fix $n \geq 1$ and set $S_n := \{ x
\in \Omega: \dist (x, \partial \Omega) < \delta \}$, where $\delta =
\delta(n)$ is chosen such that $|S_n| < 1/(2n)$. By
\cite[Theorem~V.20]{edmunds:87:st} we can approximate $\Omega^+ \cup
S_n$ from the outside by a smooth domain $U_n$ as follows. Let $\Omega
\supset U_n \supset \Omega^+ \cup S_n$ be such that $\partial U_n =
\partial \Omega \cup \Gamma_n$, where $\Gamma_n \subset\!\subset
\Omega$ is $C^\infty$ and $|U_n \setminus (\Omega^+ \cup S_n)| <
1/(2n)$. We also impose the condition that $\Gamma \subset \Gamma_n$,
which we can do since $\partial_i \Omega^+$ is $C^\infty$ in an open
neighbourhood $B(x_0, r) \subset \Omega$ containing $\overline
\Gamma$. Then for any $n \geq 1$, $U_n$ is Lipschitz, $|U_n \setminus
\Omega^+| < 1/n$, and since $B(x_0, r) \subset \! \subset \Omega$,
without loss of generality $\dist(\overline U_n \setminus \Omega^+,
\Gamma) > 0$ as well. (See Figure~\ref{fig:domains}.)
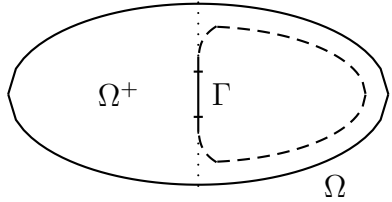
\begin{figure}[ht]
  \centering
  \begin{pspicture}*(-3,-1.4)(3,1.3)
    \psplot{-2.5}{2.5}{1.2 1 0.16 x 2 exp mul sub sqrt mul}%
    \psplot{-2.5}{2.5}{1.2 1 0.16 x 2 exp mul sub sqrt neg mul}%
    \psplot[linestyle=dashed]{0.25}{2.20001}{0.9 1 0.2066 x 2 
      exp mul sub sqrt mul}%
    \psplot[linestyle=dashed]{0.25}{2.2}{0.9 1 0.2066 x 2 
      exp mul sub sqrt mul neg}%
    \psline[linestyle=dotted](0,-1.23)(0,-0.5)%
    \psline[linestyle=dotted](0,0.5)(0,1.23)%
    \psline(0,-0.5)(0,0.5)%
    \psline(-0.06,0.3)(0.06,0.3)%
    \psline(-0.06,-0.3)(0.06,-0.3)%
    \uput[l](-0.6,0){$\Omega^+$}%
    \uput[r](0,0){$\Gamma$}%
    \psplot[linestyle=dashed]{0.01}{0.24}{0.15 x 0.2 exp add}%
    \psplot[linestyle=dashed]{0.01}{0.24}{0.15 x 0.2 exp add neg}%
    \uput[d](1.8,-0.9){$\Omega$}%
  \end{pspicture}
  \caption{$\Omega^+$ and $U_n$. The dotted line represents
    $\partial_i \Omega^+$ and the dashed line $\Gamma_n = \partial U_n
    \cap \Omega$.}
  \label{fig:domains}
\end{figure}

In order to use the $U_n$, we need the following modification of the
standard result that if $U \subset \R^N$ is open, arbitrary then
functions in $W^{1,p}(U)$ vanishing continuously on $\partial U$ lie
in $W^{1,p}_0 (U)$ (cf.~\cite[Section~7.5]{gilbarg:83:pde}).

\begin{lemma}
  \label{lemma:contain}
  Let $\varphi \in V_0$ and fix $n \geq 1$. The function $\tilde
  \varphi: U_n \to \R$ given by $\tilde \varphi = \varphi$ in
  $\Omega^+$, $\tilde \varphi = 0$ in $U_n \setminus \Omega^+$
  lies in $W^{1,p} (U_n)$.
\end{lemma}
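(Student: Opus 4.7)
My plan is to establish the lemma by approximating $\varphi$ with truncations and passing to the limit in $W^{1,p}(U_n)$. I would first introduce $\varphi_\epsilon := \operatorname{sign}(\varphi)\,(|\varphi|-\epsilon)_+$ for $\epsilon > 0$; by standard chain-rule/Stampacchia results, $\varphi_\epsilon \in W^{1,p}(\Omega^+)$ with $\nabla\varphi_\epsilon = \nabla\varphi \cdot \chi_{\{|\varphi|>\epsilon\}}$, and $\varphi_\epsilon \to \varphi$ in $W^{1,p}(\Omega^+)$ as $\epsilon \to 0^+$. The goal is then to prove $\tilde\varphi_\epsilon \in W^{1,p}(U_n)$ for each $\epsilon$, and conclude by taking limits.

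The key geometric observation is that $\tilde\varphi_\epsilon$ vanishes on a $U_n$-open neighborhood $N$ of $A := \partial_i\Omega^+ \setminus \Gamma$. This uses crucially that the construction of $U_n$ places $A$ in the interior of $U_n$ (because $\partial U_n = \partial\Omega \cup \Gamma_n$ with $\Gamma \subset \Gamma_n$ and $\dist(\overline{U_n}\setminus\Omega^+,\Gamma) > 0$, hence $\Gamma,\partial_e\Omega^+ \subset \partial U_n$), while $\varphi \in C(\overline{\Omega^+})$ vanishes on $A$ by definition of $V_0$. Explicitly, for each $x_0 \in A$ continuity of $\varphi$ provides a ball $B(x_0,\delta_0) \subset U_n$ with $|\varphi| < \epsilon$ throughout $B(x_0,\delta_0) \cap \Omega^+$, while $\tilde\varphi_\epsilon \equiv 0$ on $B(x_0,\delta_0) \setminus \Omega^+$ by definition; the union $N$ of such balls has the claimed property.

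Given $N$, I would fix $\eta \in C_c^\infty(U_n)$ together with a cutoff $\rho \in C_c^\infty(\R^N)$ satisfying $\operatorname{supp}(\rho) \subset N$ and $\rho \equiv 1$ in a smaller neighborhood of $A$, and split $\eta = \eta\rho + \eta(1-\rho)$. The $\eta\rho$ piece contributes nothing to either of the integrals $\int_{U_n} \tilde\varphi_\epsilon\,\partial_i\eta$ and $\int_{U_n} \widetilde{\partial_i\varphi_\epsilon}\,\eta$, since on $N$ both $\tilde\varphi_\epsilon$ and $\widetilde{\nabla\varphi_\epsilon}$ vanish while off $N$ both $\eta\rho$ and $\partial_i(\eta\rho)$ vanish. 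For the $\eta(1-\rho)$ piece, its restriction to $\Omega^+$ lies in $C_c^\infty(\Omega^+)$: the factor $\eta$ kills any contribution near $\partial U_n \supset \Gamma \cup \partial_e\Omega^+$, and $(1-\rho)$ kills any contribution near $A$, leaving compact support in the interior of $\Omega^+$. Standard integration by parts against $\varphi_\epsilon \in W^{1,p}(\Omega^+)$ then furnishes the identity $\int_{U_n} \tilde\varphi_\epsilon\,\partial_i\eta\,dx = -\int_{U_n} \widetilde{\partial_i\varphi_\epsilon}\,\eta\,dx$, showing $\tilde\varphi_\epsilon \in W^{1,p}(U_n)$ with weak gradient $\widetilde{\nabla\varphi_\epsilon}$.

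Finally, since $\varphi_\epsilon \to \varphi$ in $W^{1,p}(\Omega^+)$, extending by zero gives $\tilde\varphi_\epsilon \to \tilde\varphi$ and $\widetilde{\nabla\varphi_\epsilon} \to \widetilde{\nabla\varphi}$ in $L^p(U_n)$; passing to the limit $\epsilon \to 0^+$ in the weak-derivative identity yields $\tilde\varphi \in W^{1,p}(U_n)$. I expect the geometric step---verifying the existence of the $U_n$-open neighborhood $N$ of $A$ on which $\tilde\varphi_\epsilon$ vanishes---to be the main obstacle, as it is here that the specific construction of $U_n$ (with $A$ strictly interior while $\Gamma$ and $\partial_e\Omega^+$ lie on $\partial U_n$) interacts essentially with the continuous boundary condition $\varphi|_A = 0$ inherent in the definition of $V_0$.
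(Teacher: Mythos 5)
Your argument is correct and follows essentially the same route as the paper: truncate at level $\epsilon$ so that the function vanishes on a neighbourhood (inside $U_n$) of $\partial_i\Omega^+\setminus\Gamma$, which is exactly where $U_n\setminus\Omega^+$ meets $\overline{\Omega^+}$, extend by zero, and pass to the limit as $\epsilon\to 0$. The only differences are cosmetic: you treat sign-changing $\varphi$ directly via the two-sided truncation where the paper first reduces to $\varphi\geq 0$ by lattice properties, you carry out the extension-by-zero step explicitly with a cutoff and integration by parts where the paper cites Brezis, and you conclude by norm convergence rather than monotone convergence.
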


\begin{proof}
  Let $\varphi \in V_0$ and $\tilde \varphi$ be as in the statement of
  the lemma. Using the lattice properties of $V_0$ and $W^{1,p} (U_n)$
  (cf.~\cite[Lemma~7.6]{gilbarg:83:pde}) we may assume that $\varphi
  \geq 0$ in $\Omega^+$. For $\xi > 0$ let $\varphi_\xi := (\varphi -
  \xi)^+ \in V_0$. Then by continuity of $\varphi$, there exists an
  open neighbourhood $U = U(\varphi, \xi)$ of $\partial_i \Omega^+
  \setminus \Gamma$ such that $\varphi_\xi \equiv 0$ on $U \cap
  \overline{\Omega^+}$. Since the intersection of $U_n \setminus
  \Omega^+$ with $\overline{\Omega^+}$ is contained in $\partial_i
  \Omega^+ \setminus \Gamma$, adapting the argument in
  \cite[Th{\'e}or{\`e}me~IX.17]{brezis:83:af} we may certainly extend
  $\varphi_\xi$ by $0$ in $U_n \setminus \Omega^+$ to obtain a
  function $\tilde \varphi_\xi \in W^{1,p} (U_n)$. Since $\tilde
  \varphi_\xi \nearrow \tilde \varphi$ and
  \begin{displaymath}
    \nabla \tilde \varphi_\xi(x) \nearrow g(x):=
    \begin{cases}
      \nabla \varphi(x) & \qquad \text{if $x \in \Omega^+$}\\
      0 & \qquad \text{if $x \in U_n \setminus \Omega^+$}
    \end{cases}
  \end{displaymath}
  pointwise monotonically in $U_n$ as $\xi \to 0$, it follows easily
  that $g = \nabla \tilde \varphi$ and $\tilde \varphi \in
  W^{1,p}(U_n)$.
\end{proof}

For any $n \geq 1$ and $\varphi \in V_0$, using the extension $\tilde
\varphi \in W^{1,p} (U_n)$ of $\varphi$ in the representation
\begin{displaymath}
  \lambda_1 (U_n) = \inf_{\tilde \varphi \in W^{1,p}(U_n)}
  \frac{\int_{U_n} |\nabla \tilde \varphi|^p\,dx + \int_{\partial U_n} 
    \alpha |\tilde\varphi|^p\,d\sigma}{\int_{U_n} |\tilde\varphi|^p\,dx}
\end{displaymath}
we see $Q_p (\varphi) \geq \lambda_1(U_n)$ for every $\varphi \in
V_0$. Hence $\kappa(\Omega^+) \geq \lambda_1 (U_n)$ by
\eqref{eq:kappa}. Now let $B_n$ be a ball with $|B_n| = |U_n|$. By the
Faber-Krahn inequality \cite[Theorem~1.1]{bucur:09:aa}, $\lambda_1
(U_n) \geq \lambda_1 (B_n)$. As $n \to \infty$, $|U_n| \to |\Omega^+|$
and so $\lambda_1 (B_n) \to \lambda_1 (B^+)$ by
Lemma~\ref{lemma:ball}. We conclude that $\lambda_2 (\Omega) > \kappa
(\Omega^+) \geq \limsup_{n \to \infty} \lambda_1 (U_n) \geq \lambda_1
(B^+)$, which in light of our earlier comments completes the proof.

\section{On the higher eigenvalues of the Robin Laplacian}
\label{sec:rhigher}

From now on we will assume $p=2$ in \eqref{eq:robinproblem}. We will
consider the problem \eqref{eq:minproblem} for $k \geq 3$ fixed. In
contrast to the Dirichlet case, this is not one problem but a family
depending on the parameter $\alpha>0$. Here we will show that one
cannot in general find a solution to \eqref{eq:minproblem} independent
of $\alpha$ (alternatively, of the volume $M$).  Roughly speaking, for
large $\alpha$ we are close to the corresponding Dirichlet problem,
while for $\alpha$ close to $0$ (a Neumann problem), the domain $D_k$
consisting of the disjoint union of $k$ equal balls is in some sense a
minimiser.  We will denote by $B_m$ a ball of volume $m$, so that
$D_k$ is the disjoint union of $k$ copies of $B_{M/k}$, and $\lambda_k
(D_k, \alpha) = \lambda_1 (D_k, \alpha) = \lambda_1 (B_{M/k},
\alpha)$.

\begin{theorem}
  \label{th:robink}
  Let $p=2$ in \eqref{eq:robinproblem}.
  \begin{itemize}
  \item[(i)] Given any $\Omega \subset \R^N$ of volume $M$ satisfying
    Remark~\ref{rem:general}(ii) such that $\Omega \neq D_k$ in the
    sense of Remark~\ref{rem:general}(iii), there exists
    $\alpha_\Omega > 0$ possibly depending on $\Omega$ such that
    $\lambda_k (\Omega, \alpha) > \lambda_k (D_k, \alpha)$ for all
    $\alpha \in (0, \alpha_\Omega)$.
  \item[(ii)] There exist $N \geq 2$ and $k \geq 3$ for which, given
    $M>0$, there is no solution to \eqref{eq:minproblem} independent of
    $\alpha$; equivalently, there is no domain $D$ satisfying
    $\lambda_k (\Omega, \alpha) \geq \lambda_k (D, \alpha)$ for all
    $\alpha \in (0,\infty)$ and all $\Omega$.
  \item[(iii)] There exist $N \geq 2$ and $k \geq 3$ for which, given
    $\alpha > 0$, there is no solution to \eqref{eq:minproblem}
    independent of $M>0$.
  \end{itemize}
\end{theorem}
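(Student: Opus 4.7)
The plan is to study the small-$\alpha$ and large-$\alpha$ asymptotics of $\lambda_k(\Omega, \alpha)$ and combine them with an isoperimetric argument and an explicit Dirichlet comparison.

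For part (i), the starting point is the first-order expansion
\begin{equation*}
\lim_{\alpha \to 0^+} \frac{\lambda_1(\Omega', \alpha)}{\alpha} = \frac{|\partial \Omega'|}{|\Omega'|}
\end{equation*}
for any bounded connected Lipschitz $\Omega'$, obtained by testing against constants for the upper bound and, for the matching lower bound, by noting that an $L^2$-normalised first eigenfunction must converge in $W^{1,2}$ to the constant $|\Omega'|^{-1/2}$ (else $\lambda_1 \not\to 0$) and invoking the trace theorem on the boundary integral. By Remark~\ref{rem:general}(ii), on a domain $\Omega$ with $c$ components $\Omega_1,\ldots,\Omega_c$ the lowest $c$ Robin eigenvalues are the $\lambda_1(\Omega_i,\alpha)$ reordered, so if $c \geq k$ then $\lambda_k(\Omega,\alpha)/\alpha$ converges to the $k$-th smallest value $s_{(k)}$ of the ratios $s_i := |\partial\Omega_i|/|\Omega_i|$, while if $c < k$ the conclusion of (i) is immediate since $\lambda_k(\Omega,\alpha)$ is bounded below by a positive Neumann eigenvalue of $\Omega$. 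When $c \geq k$ the isoperimetric inequality gives $s_i \geq s^*(|\Omega_i|)$, with $s^*(m) := N\omega_N^{1/N} m^{-1/N}$ the corresponding value for the ball of volume $m$ (equality iff $\Omega_i$ is a ball); a volume-counting argument, namely that at least $k$ components with $s_i < s^*(M/k)$ would force $\sum |\Omega_i| > M$, yields $s_{(k)} \geq s^*(M/k) = \lim_{\alpha \to 0^+} \lambda_k(D_k,\alpha)/\alpha$, with equality only when $\Omega = D_k$. Comparison of the two first-order expansions then gives the strict inequality for all sufficiently small $\alpha$.

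For part (ii), I would use the complementary fact that $\lambda_k(\Omega,\alpha) \to \mu_k(\Omega)$ as $\alpha \to \infty$ for bounded Lipschitz $\Omega$, and take the explicit case $(N,k) = (2,3)$. Comparing the single disk $B_M$ with $D_3$ via the zeros of Bessel functions gives $\mu_3(B_M) = j_{1,1}^2 \pi/M$ and $\mu_3(D_3) = \mu_1(B_{M/3}) = 3 j_{0,1}^2 \pi/M$; since $j_{1,1}^2 \approx 14.68 < 17.35 \approx 3 j_{0,1}^2$, it follows that $\lambda_3(B_M,\alpha) < \lambda_3(D_3,\alpha)$ for all large $\alpha$. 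If a fixed domain $D$ were a minimiser for every $\alpha > 0$, part (i) would force $D = D_k$, but the Dirichlet limit just computed then contradicts the minimality of $D$ against $B_M$. For part (iii) the plan is to use the scaling identity $\lambda_k(t\Omega,\alpha) = t^{-2}\lambda_k(\Omega, t\alpha)$, read off directly from \eqref{eq:robinproblem} via $u(x) \mapsto u(x/t)$: it interchanges the roles of $M$ and $\alpha$, so the existence of an $\alpha$-independent minimiser at fixed $M$ is equivalent to that of an $M$-independent minimiser at fixed $\alpha$, and (iii) follows from (ii).

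The main obstacle I anticipate is pinning down the small-$\alpha$ asymptotics of $\lambda_k$ on a disconnected domain in part (i), in particular verifying the two-sided expansion component by component and confirming that higher eigenvalues on individual components do not contaminate the $k$-th overall eigenvalue for $\alpha$ small; once this is in hand the remainder reduces to standard isoperimetric and Robin-to-Dirichlet facts together with the Bessel-function computation above.
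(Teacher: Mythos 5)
Your parts (ii) and (iii) essentially reproduce the paper's argument: the paper likewise plays the small-$\alpha$ regime of part (i) against the Dirichlet limit, using $\lambda_k(V,\alpha)<\mu_k(V)$ together with $\lambda_k(D_k,\alpha)=\lambda_1(B_{M/k},\alpha)\to\mu_1(B_{M/k})=\mu_k(D_k)$ --- note that only Lemma~\ref{lemma:cm}(iii) for a \emph{ball} is available in the paper, so you should phrase your large-$\alpha$ step this way rather than asserting $\lambda_k(\Omega,\alpha)\to\mu_k(\Omega)$ in general (for the disc you in any case only need the one-sided bound $\lambda_3(B_M,\alpha)<\mu_3(B_M)$ from Lemma~\ref{lemma:cm}(ii)). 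Your explicit computation $j_{1,1}^2<3j_{0,1}^2$ for $(N,k)=(2,3)$ is a concrete instance of the paper's Lemma~\ref{lemma:mukmin} and suffices for the purely existential statement, and the homothety $\lambda_k(t\Omega,\alpha)=t^{-2}\lambda_k(\Omega,t\alpha)$ deriving (iii) from (ii) is exactly Remark~\ref{rem:higherexamples}(ii).

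For part (i) you take a genuinely different route --- first-order asymptotics $\lambda_1(\Omega_i,\alpha)\sim\alpha\,|\partial\Omega_i|/|\Omega_i|$ combined with the classical isoperimetric inequality --- whereas the paper works at each fixed $\alpha$ with the Robin Faber--Krahn inequality and the strict monotonicity of $\lambda_1(B_m,\alpha)$ in $m$ (Lemma~\ref{lemma:ball}). Your volume-counting argument giving $s_{(k)}\geq s^*(M/k)$ with equality iff $\Omega=D_k$ is correct, and the two-sided expansion for a single fixed Lipschitz component is fine. The genuine gap is that the admissible class in Remark~\ref{rem:general}(ii) allows \emph{countably many} connected components, and your identification of the lowest $c$ eigenvalues with the reordered first eigenvalues of the components, and hence the convergence $\lambda_k(\Omega,\alpha)/\alpha\to s_{(k)}$, is only justified for finite $c$: there one has $\max_i\lambda_1(\Omega_i,\alpha)\leq\alpha\max_i s_i\to0$ while $\min_i\lambda_2(\Omega_i,\alpha)\geq\min_i\lambda_2(\Omega_i,0)>0$. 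When $c=\infty$ one can have $\sup_i s_i=\infty$ and $\inf_i\lambda_2(\Omega_i,0)=0$, the convergence $\lambda_1(\Omega_i,\alpha)/\alpha\to s_i$ is not uniform in $i$, and second eigenvalues of thin components may intrude among the first $k$. This is precisely where the paper spends most of its effort: it writes $\Omega$ as the disjoint union of a finite union of components $\Omega'$ and a remainder $\Omega''$ with $|\Omega''|<M/k$, disposes of $\Omega''$ for \emph{every} $\alpha$ via Faber--Krahn and strict monotonicity ($\lambda_1(\Omega'',\alpha)\geq\lambda_1(B_{|\Omega''|},\alpha)>\lambda_1(B_{M/k},\alpha)=\lambda_k(D_k,\alpha)$), and only then runs a case analysis on the finitely many remaining components. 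You need the same (or an equivalent) truncation before your asymptotic comparison becomes legitimate; with it in place, your argument goes through and is arguably more elementary in that the lower bound uses only the classical isoperimetric inequality rather than its Robin analogue.
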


\begin{remark}
  \label{rem:higherexamples}
  (i) The conclusion of Theorem~\ref{th:robink}(ii) and (iii) holds
  whenever $D_k$ does not minimise the $k$th Dirichlet eigenvalue
  $\mu_k$. When $N=2$ this is true for all $k \geq 3$ (we prove this
  below) and when $N=3$ at least for $k=3$ (for the latter see
  \cite[Section~3]{bucur:00:3de}).

  (ii) It is easy to see (ii) and (iii) are equivalent assertions,
  since by making the homothety substitution $x \mapsto \alpha x$,
  \eqref{eq:robinproblem} is equivalent to the problem $-\Delta u =
  (\lambda/\alpha^2) u$ in $\alpha \Omega = \{\alpha x: x \in
  \Omega\}$, $\frac{\partial u}{\partial \nu} + u = 0$ on $\partial
  (\alpha \Omega)$.

  (iii) It is clear that any domain $\Omega$ with more than $k$
  connected components (c.c.s) cannot minimise $\lambda_k$ for any
  value of $\alpha$. However, the theorem makes a stronger statement
  than this and as a result the proof is somewhat more involved.
  Indeed, for some $k$, $N$, we can easily find a domain $\Omega_n$
  with any $n \geq 1$ c.c.s and $\alpha_\Omega< \infty$. (Just take
  $N=k=3$, so that for the ball $B$, $\alpha_B<\infty$. Shrink $B$
  slightly and add $n-1$ disjoint tiny balls to get $\Omega_n$.) Note
  that the Robin problem \eqref{eq:robinproblem} lacks many useful
  properties that the corresponding Dirichlet problem satisfies. For
  example, the domain monotonicity property fails; that is, $U \subset
  V$ does not necessarily imply $\lambda_k (U, \alpha) \geq \lambda_k
  (V, \alpha)$ (see \cite{payne:57:lb} or \cite{giorgi:05:mr} for a
  counterexample). Similarly, if $\lambda_k (U, \alpha) > \lambda_k
  (V, \alpha)$ holds for some $\alpha>0$, we cannot in general expect
  this for all $\alpha>0$.

  (iv) An examination of our proof shows that the conclusion of
  Theorem~\ref{th:robink}(i) holds for any domain $\Omega$ for which
  the Faber-Krahn inequality \cite[Theorem~1.1]{bucur:09:aa} and
  Theorem~\ref{th:rp2} hold.
\end{remark}

\begin{proof}[Proof of Theorem~\ref{th:robink}(i)]
  There are two cases to consider, depending on how many c.c.s
  $\Omega$ has.

  (i) Suppose first that $\Omega$ has at most $k-1$ c.c.s. If we set
  $\varepsilon := \min\, \{\lambda_2 (\widetilde \Omega, 0) :
  \widetilde \Omega \text{ is a c.c.~of } \Omega \}$, then
  $\varepsilon > 0$ by Lemma~\ref{lemma:con}. It follows from
  Lemma~\ref{lemma:cm}(i) that there exists $\tilde \alpha_\Omega > 0$
  such that
  \begin{displaymath}
    \max\, \{ \lambda_1 (\widetilde \Omega, \alpha) :
    \widetilde \Omega \text{ is a c.c.~of } \Omega \} < \varepsilon 
  \end{displaymath}
  for all $\alpha \in (0, \tilde \alpha_\Omega)$. For all such
  $\alpha$, by the pigeonhole principle at least one element of the
  set $\{ \lambda_m (\widetilde \Omega, \alpha) : m \geq 2, \widetilde
  \Omega \text{ is a c.c.~of } \Omega \}$ must be one of the first $k$
  eigenvalues of $\Omega$ (although precisely which $m$ and c.c.~may
  depend on $\alpha$). In particular, using Lemma~\ref{lemma:cm}(i),
  \begin{displaymath}
    \begin{split}
      \lambda_k (\Omega, \alpha) & \geq \inf \, \{ \lambda_m (\widetilde 
      \Omega, \alpha) : m \geq 2,\, \widetilde \Omega \text{ is a c.c.~of } 
      \Omega \}\\
      &\geq \inf \, \{\lambda_2 (\widetilde \Omega, 0) : \widetilde \Omega 
      \text{ is a c.c.~of } \Omega \} \geq \varepsilon
    \end{split}
  \end{displaymath}
  for all $\alpha \in (0, \tilde \alpha_\Omega)$. Since $\lambda_k
  (D_k, \alpha) = \lambda_1 (D_k, \alpha) \to 0$ as $\alpha \to 0$,
  there exists $0< \alpha_\Omega \leq \tilde \alpha_\Omega$ such that
  $\lambda_k (D_k, \alpha) < \varepsilon \leq \lambda_k (\Omega,
  \alpha)$ for all $\alpha \in (0, \alpha_\Omega)$.

  (ii) Now suppose $\Omega$ has at least $k$ c.c.s. We may write
  $\Omega$ as the disjoint union of $\Omega'$ and $\Omega''$, where
  $\Omega'$ has $j < \infty$ c.c.s and $|\Omega''| < M/k$ (if
  $\Omega'' = \emptyset$, then we declare $\lambda_1 (\Omega'',
  \alpha) = \infty$ for all $\alpha>0$).  Consider all possible open
  subdomains $\Omega_i$ of $\Omega'$, where $\Omega_i$ consists of
  $l_i \leq k-1$ c.c.s of $\Omega'$ (thus there are fewer than $2^j$
  possible choices of $\Omega_i$). For each $i$, let $D_{k,i}$ denote
  a scaled down version of $D_k$ such that $|D_{k,i}| = |\Omega_i|$.
  Then by case (i) and Lemma~\ref{lemma:ball}, there exists $\alpha_i
  := \alpha_{\Omega_i}$ such that
  \begin{equation}
    \label{eq:iforii}
    \lambda_k(\Omega_i,\alpha)>\lambda_k(D_{k,i},\alpha)\geq
    \lambda_k(D_k,\alpha)
  \end{equation}
  for all $\alpha \in (0,\alpha_i)$.

  Set $\alpha_\Omega:= \min_i \alpha_i > 0$ and fix $\alpha \in (0,
  \alpha_\Omega)$. We will show $\lambda_k (\Omega, \alpha) \geq
  \lambda_k (D_k, \alpha)$ with equality only if $\Omega = D_k$ in
  the sense of Remark~\ref{rem:general}(iii).

  First suppose $\lambda_1(\Omega'',\alpha)\leq \lambda_k(\Omega,
  \alpha)$. Then by the Faber-Krahn inequality
  \cite[Theorem~1.1]{bucur:09:aa} and Lemma~\ref{lemma:ball}
  \begin{equation}
    \label{eq:inftycase}
    \lambda_k(\Omega,\alpha) \geq \lambda_1(\Omega'',\alpha) \geq
    \lambda_1(B_{M/k}) = \lambda_k (D_k, \alpha).
  \end{equation}
  Since $|\Omega''|<M/k$, Lemma~\ref{lemma:ball} implies that the
  second inequality in \eqref{eq:inftycase} must be strict.

  So assume now that $\lambda_1 (\Omega'', \alpha) > \lambda_k
  (\Omega, \alpha)$. There are two subcases to consider. First, if
  there are only $l < k$ c.c.s $\Omega_1, \ldots, \Omega_l$ of
  $\Omega'$ whose first eigenvalue is smaller than $\lambda_k (\Omega,
  \alpha)$, then setting $\widehat \Omega$ to be the disjoint union of
  $\Omega_1, \ldots, \Omega_l$, by \eqref{eq:iforii} we have
  \begin{displaymath}
    \lambda_k (\Omega, \alpha) = \lambda_k (\widehat \Omega, \alpha)
    > \lambda_k (D_k, \alpha)
  \end{displaymath}
  by choice of $\alpha_\Omega$ and $\alpha<\alpha_\Omega$. Finally,
  suppose there are at least $k$ c.c.s $\Omega_i$ of $\Omega'$ such
  that $\lambda_1 (\Omega_i, \alpha) \leq \lambda_k (\Omega, \alpha)$
  for all $i$. Then $\lambda_k (\Omega, \alpha) = \max_{1 \leq i \leq
    k} \lambda_1 (\Omega_i, \alpha)$. For each $i$ let $B_i$ be a ball
  with $|B_i| = |\Omega_i|$. By the Faber-Krahn inequality $\lambda_1
  (\Omega_i, \alpha) \geq \lambda_1 (B_i)$ for all $i$ and thus
  \begin{equation}
    \label{eq:allballs}
    \lambda_k (\Omega, \alpha) \geq \max_i \lambda_1(B_i, \alpha)
    \geq \lambda_1 (B_{M/k}, \alpha) = \lambda_k (D_k, \alpha),
  \end{equation}
  where the second inequality in \eqref{eq:allballs} follows easily
  from Lemma~\ref{lemma:ball} using $\sum_i |B_i| \leq |\Omega|$.  If
  there is equality in \eqref{eq:allballs}, then for every $1 \leq i
  \leq k$, $\lambda_1 (\Omega_i, \alpha) = \lambda_1 (B_i, \alpha) =
  \lambda_1 (B_{M/k}, \alpha)$ and so $\Omega_i = B_i = B_{M/k}$ using
  sharpness of the Faber-Krahn inequality
  \cite[Theorem~1.1]{bucur:09:aa} and Lemma~\ref{lemma:ball},
  respectively. In this case $|\Omega_i| = M/k$ and so $\Omega$ must
  consist of $k$ copies of $\Omega_i = B_{M/k}$, so $\Omega = D_k$.
\end{proof}

In order to complete the proof of the theorem and our claim in
Remark~\ref{rem:higherexamples}(i), we will use the following lemma.
Recall $\mu_k (\Omega)$ denotes the $k$th eigenvalue of the Dirichlet
Laplacian (with $p=2$) on $\Omega$.

\begin{lemma}
  \label{lemma:mukmin}
  Let $N=2$ and fix $k \geq 3$. The domain $D_k$ does not minimise
  $\mu_k(\Omega)$ amongst all bounded Lipschitz domains in $\R^2$ of
  given volume.
\end{lemma}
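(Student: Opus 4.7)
The plan is, for each $k \ge 3$, to exhibit a competitor $\tilde\Omega$ with $|\tilde\Omega| = M$ and $\mu_k(\tilde\Omega) < \mu_k(D_k)$. My candidate is a disjoint union of one ``large'' disk $B$ of area $M_1$ together with $k-3$ identical ``small'' disks $B'$ of area $M_2$ each, subject to $M_1 + (k-3)M_2 = M$ (for $k=3$ this degenerates to the single disk $\tilde\Omega = B_M$). The two radii will be pinned down by the matching condition $\mu_2(B) = \mu_1(B')$.

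Everything hinges on the fact that in $\R^2$ the first Dirichlet eigenvalue of a disk is simple while the second is a double eigenvalue, so $\mu_2(B) = \mu_3(B)$. Combined with the matching condition, precisely $k$ eigenvalues of $\tilde\Omega$ sit at or below the common value $\lambda_\star := \mu_2(B)$: one strictly below it ($\mu_1(B)$), two at $\lambda_\star$ from $B$ itself, and one at $\lambda_\star$ from each of the $k-3$ small disks. The next eigenvalues, $\mu_4(B) = j_{2,1}^2 \pi/M_1$ and $\mu_2(B') = j_{1,1}^2 \pi/M_2$, strictly exceed $\lambda_\star$ because $j_{2,1}>j_{1,1}>j_{0,1}$, so the count is sharp and $\mu_k(\tilde\Omega) = \lambda_\star$.

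Using $\mu_1(B_m) = j_{0,1}^2 \pi/m$ and $\mu_2(B_m) = j_{1,1}^2 \pi/m$ in two dimensions, the matching condition forces $M_2/M_1 = (j_{0,1}/j_{1,1})^2$; substituting into the area constraint I obtain
\begin{displaymath}
\mu_k(\tilde\Omega) \;=\; \frac{\pi\bigl(j_{1,1}^2 + (k-3)\,j_{0,1}^2\bigr)}{M}, \qquad \mu_k(D_k) \;=\; \mu_1(B_{M/k}) \;=\; \frac{k\pi\,j_{0,1}^2}{M}.
\end{displaymath}
The desired strict inequality $\mu_k(\tilde\Omega) < \mu_k(D_k)$ therefore collapses, \emph{independently of $k$}, to the single numerical inequality $j_{1,1}^2 < 3\,j_{0,1}^2$.

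The only non-trivial step is verifying this Bessel-zero inequality; everything else is bookkeeping, the matching condition, or the Faber--Krahn eigenvalue of a disk. I will simply quote the standard numerical values $j_{0,1} \approx 2.4048$ and $j_{1,1} \approx 3.8317$, giving $j_{1,1}^2 \approx 14.68 < 17.35 \approx 3\,j_{0,1}^2$. Since $\tilde\Omega \ne D_k$ (it has only $k-2$ connected components, of two distinct sizes, or is connected when $k=3$), the lemma will then follow.
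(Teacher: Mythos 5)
Your construction is correct, and it is a genuinely different argument from the one in the paper. The paper proceeds by induction on $k$, importing two nontrivial facts from Wolf and Keller: that $D_k$ fails to minimise $\mu_k$ even among unions of balls for $3\leq k\leq 17$, and the structure theorem that a minimiser of $\mu_{k+1}$ splits into rescaled minimisers of $\mu_j$ and $\mu_{k-j+1}$; the contradiction is then propagated upward. You instead exhibit, for every $k\geq 3$ at once, an explicit competitor (one disk carrying $\mu_1,\mu_2,\mu_3$ plus $k-3$ small disks each contributing one ground state, glued spectrally by the matching condition $\mu_2(B)=\mu_1(B')$), and the whole lemma collapses to the single numerical inequality $j_{1,1}^2<3j_{0,1}^2$, uniformly in $k$. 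I checked the bookkeeping: the double multiplicity of $\mu_2$ on the disk gives exactly $3+(k-3)=k$ eigenvalues at or below $\lambda_\star$ with only $\mu_1(B)$ strictly below (note $M_1>M_2$), the next thresholds $j_{2,1}^2\pi/M_1$ and $j_{1,1}^2\pi/M_2$ do exceed $\lambda_\star$, and the algebra reducing to $j_{1,1}^2<3j_{0,1}^2$ is right; the $k=3$ degenerate case correctly recovers the classical fact that the single disk beats three disks for $\mu_3$ in the plane. What your route buys is self-containedness and explicitness --- the only external inputs are the Bessel-function spectrum of the disk and two standard numerical values --- whereas the paper's route buys nothing extra here but would generalise to situations where one only knows abstract structure of minimisers rather than explicit spectra. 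Your competitor is essentially the Wolf--Keller upper-bound construction made quantitative, so the two proofs are cousins, but yours removes the reliance on their tabulated results and on the decomposition theorem.
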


\begin{proof}
  The proof is by an easy induction argument, using results from
  \cite{wolf:94:re}. First note that $D_k$ does not even minimise
  $\mu_k$ amongst all disjoint unions of balls if $3 \leq k \leq 17$
  (see \cite[Section~8]{wolf:94:re}).

  Now fix $k \geq 4$. We will show that if $D_{k+1}$ minimises
  $\mu_{k+1}$, then $D_j$ must minimise $\mu_j$ for some $3 \leq j
  \leq k$. For, arguing as in \cite[Theorem~8.1]{wolf:94:re},
  $D_{k+1}$ may be written as the disjoint union of open sets $U$ and
  $V$, say, where $U$ minimises $\mu_j$ and $V$ minimises
  $\mu_{k-j+1}$ (both appropriately scaled) for some integer $j$
  between $1$ and $k/2$. Now $U$ and $V$ must both be disjoint unions
  of equal balls, and since the minimiser of $\mu_j$ can have at most
  $j$ c.c.s the only possibility is that $U = D_j$ and $V = D_{k-j+1}$
  (both rescaled). Since $k \geq 4$, at least one of $j$, $k-j+1$ must
  be at least $3$. Noting that the Dirichlet minimiser is independent
  of the volume of the domain, our claim follows.
\end{proof}

\begin{proof}[Proof of Theorem~\ref{th:robink}(ii) and 
  Remark~\ref{rem:higherexamples}(i)]
  Suppose that $D_k$ is not the minimiser of $\mu_k$, which is true if
  $N=2$ and $k \geq 3$ or $N=k=3$. Then there exists a Lipschitz
  domain $V$ such that $\mu_k(V) < \mu_k(D_k)$. By
  Lemma~\ref{lemma:cm}(ii) and (iii), we have $\lambda_k(V, \alpha) <
  \mu_k(V)$ and $\lambda_k (D_k, \alpha) = \lambda_1(D_k, \alpha) \to
  \mu_1(D_k) = \mu_k(D_k)$ as $\alpha \to \infty$. Using continuity,
  it follows that for $\alpha$ sufficiently large, $\lambda_k (V,
  \alpha) < \mu_k (D_k, \alpha)$. Hence $D_k$ does not minimise
  $\lambda_k$ for all $\alpha \in (0, \infty)$. However, if $U \neq
  D_k$ is any (Lipschitz) domain which minimises $\lambda_k$ for some
  $\tilde \alpha \in (0,\infty)$, then by part (i) $\lambda_k (U,
  \alpha) > \lambda_k (D_k, \alpha)$ for $\alpha < \tilde \alpha$
  sufficiently small. Hence for such $N$ and $k$ no minimiser can
  exist for all $\alpha>0$.
\end{proof}

\section{On the higher eigenvalues of the Wentzell Laplacian}
\label{sec:whigher}

Here we will study the Laplacian with generalised Wentzell boundary
conditions \eqref{eq:wentzellproblem}. This problem has been
extensively studied in recent years; see for example
\cite{goldstein:06:gbc,mugnolo:06:dfw} and the references therein. We
will denote by $\Lambda_k = \Lambda_k (\Omega, \beta, \gamma)$ the
$k$th eigenvalue, with repeated eigenvalues counted according to their
multiplicity. It was proved in \cite{kennedy:08:wfk} that if $\Omega
\subset \R^N$ is a bounded Lipschitz domain, then
\begin{equation}
  \label{eq:wentzell1}
  \Lambda_1 (\Omega, \beta, \gamma) \geq \Lambda_1 (B, \beta, \gamma)
\end{equation}
for all $\beta, \gamma > 0$. (As before $B$ is a ball having the same
volume $M$ as $\Omega$.) Moreover, the inequality is sharp if $\Omega$
is of class $C^2$. Note that combining the improved sharpness result
in \cite{bucur:09:aa} for Robin problems with the method in
\cite{kennedy:08:wfk}, we immediately get sharpness of the Wentzell
inequality \eqref{eq:wentzell1} for all bounded Lipschitz domains. We
will prove the following results which basically say that the
minimisation problems for the Robin and Wentzell Laplacians are
essentially the same.

\begin{theorem}
  \label{th:wentzellk}
  Let $\beta, \gamma > 0$ and $k \geq 2$ be fixed, let $D \subset
  \R^N$ be a bounded Lipschitz domain, and let $D_k \subset \R^N$ be
  as in Section~\ref{sec:rhigher}.
  \begin{itemize}
  \item[(i)] Suppose that for every bounded Lipschitz $\Omega \subset
    \R^N$ we have
    \begin{equation}
      \label{eq:robininwentzell}
      \lambda_k(D, \alpha) \leq \lambda_k(\Omega, \alpha)
    \end{equation}
    for all $\alpha \in (0, \gamma/\beta)$. Then
    \begin{equation}
      \label{eq:wentzellinwentzell}
      \Lambda_k (D, \beta, \gamma) \leq \Lambda_k (\Omega, \beta, \gamma)
    \end{equation}
    for all such $\Omega$. Conversely, if
    \eqref{eq:wentzellinwentzell} holds, then
    \eqref{eq:robininwentzell} holds for some $\alpha \in (0,
    \gamma/\beta)$.
  \item[(ii)] If \eqref{eq:robininwentzell} is sharp for all $\alpha
    \in (0, \gamma/\beta)$, then so is \eqref{eq:wentzellinwentzell}
    for this $\beta,\gamma$. If \eqref{eq:wentzellinwentzell} is
    sharp, then \eqref{eq:robininwentzell} holds and is sharp for some
    $\alpha \in (0, \gamma/\beta)$.
  \item[(iii)] Suppose $\Omega \subset \R^N$ is bounded, Lipschitz.
    There exists $\alpha_\Omega > 0$ possibly depending $\Omega$ such
    that $\Lambda_k (\Omega, \beta, \gamma) > \Lambda_k (D_k, \beta,
    \gamma)$ for all $\beta, \gamma$ with $\gamma/\beta <
    \alpha_\Omega$.
  \item[(iv)] If for some $k$ and $N$ the conclusion of
    Theorem~\ref{th:robink} holds, then there does not exist $D
    \subset \R^N$ bounded, Lipschitz such that $\Lambda_k (\Omega,
    \beta, \gamma) \geq \Lambda_k (D, \beta, \gamma)$ for all such
    $\Omega$ and all $\beta, \gamma > 0$.
  \item[(v)] For any bounded, Lipschitz $\Omega \subset \R^N$ and any
    $\beta, \gamma>0$, we have $\Lambda_2 (\Omega, \beta, \gamma) \geq
    \Lambda_2 (D_2, \beta, \gamma)$, with equality if and only if
    $\Omega = D_2$.
  \end{itemize}
\end{theorem}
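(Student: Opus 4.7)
The plan is to set up a fixed-point correspondence between Wentzell and Robin eigenvalues, reducing (i)--(v) to Theorems~\ref{th:rp2} and~\ref{th:robink}. The correspondence rests on the observation that if $u$ is a Wentzell eigenfunction of $(\Omega,\beta,\gamma)$ with eigenvalue $\Lambda$, substituting $-\Delta u=\Lambda u$ into the boundary condition yields $\partial u/\partial\nu+((\gamma-\Lambda)/\beta)u=0$ on $\partial\Omega$, so $u$ is simultaneously a Robin eigenfunction with parameter $\alpha=(\gamma-\Lambda)/\beta$ and eigenvalue $\Lambda$; conversely, every Robin eigenpair $(\lambda,\alpha)$ with $\lambda=\gamma-\beta\alpha$ yields a Wentzell eigenfunction. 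Defining $G_{\Omega,k}(\Lambda):=\lambda_k(\Omega,(\gamma-\Lambda)/\beta)-\Lambda$, continuity and strict monotonicity of $\alpha\mapsto\lambda_k(\Omega,\alpha)$ (Lemma~\ref{lemma:cm}) make $G_{\Omega,k}$ continuous and strictly decreasing in $\Lambda$, and a monotonicity argument based on $\lambda_j\leq\lambda_{j+1}$ identifies $\Lambda_k(\Omega,\beta,\gamma)$ as the unique zero of $G_{\Omega,k}$ in $(0,\gamma)$.

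Given this correspondence, part (i) is nearly immediate. For the forward direction, set $\alpha_*:=(\gamma-\Lambda_k(\Omega,\beta,\gamma))/\beta\in(0,\gamma/\beta)$, so that $\lambda_k(\Omega,\alpha_*)=\Lambda_k(\Omega,\beta,\gamma)$; hypothesis \eqref{eq:robininwentzell} applied at $\alpha=\alpha_*$ gives $G_{D,k}(\Lambda_k(\Omega,\beta,\gamma))\leq 0=G_{D,k}(\Lambda_k(D,\beta,\gamma))$, and strict monotonicity of $G_{D,k}$ forces \eqref{eq:wentzellinwentzell}. For the converse, taking $\alpha_*=(\gamma-\Lambda_k(D,\beta,\gamma))/\beta$ together with monotonicity of $\lambda_k(D,\cdot)$ produces a witnessing Robin inequality at that specific $\alpha_*$. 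Part (ii) then follows by tracking equality through the same chain of inequalities and invoking the equality characterisation in the Robin statement.

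Parts (iii)--(v) are applications. For (iii), Theorem~\ref{th:robink}(i) gives $\lambda_k(\Omega,\alpha)>\lambda_k(D_k,\alpha)$ on $(0,\alpha_\Omega)$ whenever $\Omega\neq D_k$; taking $\gamma/\beta<\alpha_\Omega$ and invoking the strict form of (i) yields the desired Wentzell inequality. For (iv), a universal Wentzell minimiser $D$ would, by applying the converse of (i) with $\beta=1$ and $\gamma=\lambda_k(D,\alpha)+\alpha$ (so that $\alpha_*=\alpha$) for each $\alpha>0$, produce a universal Robin minimiser, contradicting the hypothesised conclusion of Theorem~\ref{th:robink}. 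Part (v) combines Theorem~\ref{th:rp2}---the sharp inequality $\lambda_2(D_2,\alpha)\leq\lambda_2(\Omega,\alpha)$ with equality iff $\Omega=D_2$, for every $\alpha>0$---with the sharpness assertion of (ii).

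The main obstacle is the foundational correspondence itself, in particular verifying that $\Lambda_k(\Omega,\beta,\gamma)<\gamma$ so that the matched Robin parameter is positive, and that the zeros $\{G_{\Omega,j}=0\}_{j\geq 1}$ are in order-preserving bijection with the Wentzell spectrum. The upper bound $\Lambda_k<\gamma$ can be obtained from the Rayleigh quotient by exhibiting a $k$-dimensional trial subspace---for $k=1$ the constants already give $R(1)<\gamma$, and for larger $k$ one combines constants with localised bumps---on which the quotient is strictly less than $\gamma$; order-preservation then follows from an inductive/continuity argument on the implicit equation, with extra care needed when eigenvalues coincide.
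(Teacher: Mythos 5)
Your overall strategy is the same as the paper's: identify $\Lambda_k(\Omega,\beta,\gamma)$ with the unique solution of $\lambda_k(\Omega,(\gamma-\Lambda)/\beta)=\Lambda$ (the paper's Lemma~\ref{lemma:wentzellid}, proved via the fixed points of $g_n(\alpha)=(\gamma-\lambda_n(\Omega,\alpha))/\beta$ and the ordering argument you sketch), and then transfer inequalities between the Robin and Wentzell problems using monotonicity of $\lambda_k(\Omega,\cdot)$ (the paper's Lemma~\ref{lemma:core}). Parts (ii) and (v) proceed exactly as in the paper, and your route to (iv) --- converting a universal Wentzell minimiser directly into a universal Robin minimiser by choosing $\beta=1$, $\gamma=\lambda_k(D,\alpha_0)+\alpha_0$ so that the matched parameter is $\alpha_0$ --- is a legitimate and arguably cleaner variant of the paper's argument, which instead uses (iii) to reduce to the single candidate $D_k$ and then exhibits one pair $\beta,\gamma$ for which $D_k$ fails.

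There is, however, a genuine gap in the step you yourself identify as the main obstacle. You need $\alpha_*=(\gamma-\Lambda_k(\Omega,\beta,\gamma))/\beta$ to lie in $(0,\gamma/\beta)$, i.e.\ $\Lambda_k(\Omega)<\gamma$, and you propose to prove this \emph{for every} $\Omega$ by exhibiting a $k$-dimensional trial subspace (constants plus localised bumps) with Rayleigh quotient below $\gamma$. This is false for $k\geq 2$: for a connected domain the curve $g_k(\alpha)=(\gamma-\lambda_k(\Omega,\alpha))/\beta$ satisfies $g_k(0)=(\gamma-\lambda_k(\Omega,0))/\beta<0$ whenever the corresponding Neumann eigenvalue $\lambda_k(\Omega,0)$ exceeds $\gamma$ (e.g.\ a single ball with $\gamma<\mu_2^{N}(B_M)$), and then the $k$th fixed point is negative, so $\Lambda_k(\Omega)>\gamma$; no trial subspace can beat $\gamma$ because gradient terms of localised bumps are not small. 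The paper only ever proves $\Lambda_k(D_k)<\gamma$ (Lemma~\ref{lemma:dk}), which is immediate because $D_k$ has $k$ components so $\Lambda_k(D_k)=\Lambda_1(B_{M/k})<\gamma$, and it handles general $\Omega$ either by passing to a minimising sequence (which may be assumed to satisfy $\Lambda_k(\Omega_m)<\gamma$ since the infimum is at most $\Lambda_k(D_k)<\gamma$) or by the trivial case split $\Lambda_k(\Omega)\geq\gamma>\Lambda_k(D)$. Your argument for (i) and (iii) is silent on the case $\Lambda_k(\Omega)\geq\gamma$, and as written the forward implication in (i) simply does not apply to such $\Omega$; you need to first deduce $\Lambda_k(D)\leq\Lambda_k(D_k)<\gamma$ (by comparing with $D_k$ at $\alpha=(\gamma-\Lambda_k(D_k))/\beta$) and then dispose of the remaining domains trivially. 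With that repair the proof closes, but the repair is necessary and your proposed substitute for it is incorrect.
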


In order to prove the theorem we will need some preliminary results.
In what follows we will assume that $\beta, \gamma > 0$ and $k \geq 2$
are fixed, and $\Omega \subset \R^N$ is a fixed bounded Lipschitz
domain.  We start with an elementary identification which is the key
to the approach.

\begin{lemma}
  \label{lemma:wentzellid}
  Let $k \geq 1$ and $\alpha := (\gamma - \Lambda_k(\Omega, \beta,
  \gamma))/\beta \in \R$. Then
  \begin{equation}
    \label{eq:wentzellid}
    \Lambda_k (\Omega, \beta, \gamma) = \lambda_k (\Omega, \alpha).
  \end{equation}
\end{lemma}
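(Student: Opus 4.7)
The plan is to establish a pointwise correspondence between Wentzell and Robin eigenpairs, and then match the eigenvalue orderings. The starting observation is the following equivalence: $u$ is a Wentzell eigenfunction of $\Omega$ with eigenvalue $\Lambda$ if and only if $u$ is a Robin eigenfunction on $\Omega$, in the sense of \eqref{eq:robinproblem} with $p = 2$, with parameter $\alpha = (\gamma - \Lambda)/\beta$ and the same eigenvalue $\Lambda$. For the forward direction I would use that $-\Delta u = \Lambda u$ in $\Omega$ extends continuously up to $\partial\Omega$ by the standard regularity of Wentzell eigenfunctions, and then substitute $\Delta u = -\Lambda u$ into the Wentzell boundary condition in \eqref{eq:wentzellproblem} to obtain $\beta\, \partial u/\partial\nu + (\gamma - \Lambda) u = 0$, which is exactly the Robin condition at $\alpha = (\gamma - \Lambda)/\beta$. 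The reverse direction is the same computation read backwards. In particular, for each $\Lambda$ the Wentzell and Robin eigenspaces coincide, and hence so do their multiplicities.

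Next I would package the one-parameter family of Robin problems into an intersection problem. For each $m \geq 1$ set $g_m(\alpha) := \lambda_m(\Omega, \alpha) + \beta\alpha - \gamma$, viewed as a map $\R \to \R$. By the standard properties of the Robin eigenvalues (continuity and strict monotonicity in $\alpha$; see Lemma~\ref{lemma:cm}(i)), $g_m$ is continuous and strictly increasing. Moreover $g_m(\alpha) \to +\infty$ as $\alpha \to +\infty$ (since $\beta\alpha \to +\infty$ while $\lambda_m(\Omega, \alpha)$ is bounded above by the Dirichlet eigenvalue $\mu_m(\Omega)$), and $g_m(\alpha) \to -\infty$ as $\alpha \to -\infty$ (since $\lambda_m(\Omega, \alpha)$ is bounded above on $(-\infty, 0]$ by the corresponding Neumann eigenvalue, while $\beta\alpha \to -\infty$). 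Hence each $g_m$ has a unique zero $\alpha_m \in \R$; set $\Lambda_m^* := \gamma - \beta\alpha_m = \lambda_m(\Omega, \alpha_m)$. The first step identifies $\Lambda_m^*$ as a Wentzell eigenvalue whose eigenspace coincides with the $m$-th Robin eigenspace at $\alpha = \alpha_m$.

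It remains to show that $(\Lambda_m^*)_{m \geq 1}$ is precisely the Wentzell spectrum in its natural non-decreasing ordering, i.e.\ $\Lambda_m^* = \Lambda_m(\Omega, \beta, \gamma)$. Exhaustion of the Wentzell spectrum follows from the equivalence of the first paragraph: every Wentzell eigenvalue $\Lambda$ satisfies $\Lambda = \lambda_j(\Omega, (\gamma - \Lambda)/\beta)$ for some $j$, which forces $(\gamma - \Lambda)/\beta = \alpha_j$ and thus $\Lambda = \Lambda_j^*$. For the ordering, I would use $\lambda_{m+1}(\Omega, \alpha) \geq \lambda_m(\Omega, \alpha)$ to conclude $g_{m+1}(\alpha_m) \geq g_m(\alpha_m) = 0$; strict monotonicity of $g_{m+1}$ then gives $\alpha_{m+1} \leq \alpha_m$, hence $\Lambda_{m+1}^* \geq \Lambda_m^*$. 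Together with the multiplicity identification from the first step this yields $\Lambda_k(\Omega, \beta, \gamma) = \Lambda_k^* = \lambda_k(\Omega, \alpha_k)$ with $\alpha_k = (\gamma - \Lambda_k(\Omega, \beta, \gamma))/\beta$, which is exactly \eqref{eq:wentzellid}. The main obstacle here is not the pointwise substitution but this final bookkeeping: one has to rule out the possibility that a low-index Wentzell eigenvalue arises from a high-index Robin curve, or vice versa, and the monotonicity $\alpha_{m+1} \leq \alpha_m$ combined with the eigenspace identification is what keeps the two indices aligned (including in the presence of multiplicities).
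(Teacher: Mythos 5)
Your proof is correct and follows essentially the same route as the paper: both identify the Wentzell eigenvalues with the unique fixed points $\alpha_m$ of the curves built from $\alpha \mapsto \lambda_m(\Omega,\alpha)$, and then use the monotonicity $\lambda_m \leq \lambda_{m+1}$ together with monotonicity in $\alpha$ to show these fixed points are ordered so that the indices match. The only differences are that you prove the Wentzell--Robin eigenpair correspondence directly by substituting $\Delta u = -\Lambda u$ into the boundary condition, where the paper cites \cite[Proposition~3.3]{kennedy:08:wfk}, and that you spell out the exhaustion and multiplicity bookkeeping which the paper compresses into ``it follows inductively''.
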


\begin{proof}
  Consider the family of curves $g_n: \R \to \R$, $g_n(\alpha):=
  (\gamma-\lambda_n (\Omega, \alpha))/\beta$, $n \geq 1$, where we
  allow multiplicities in counting the $\lambda_n$ (thus if $\lambda_n
  (\Omega, \tilde \alpha) = \lambda_{n+1} (\Omega, \tilde \alpha)$ for
  some $\tilde \alpha \in \R$, then $g_n(\tilde \alpha) =
  g_{n+1}(\tilde \alpha)$).

  We know that the set of Wentzell eigenvalues $\{\Lambda_k: k \geq
  1\}$ is in one-to-one correspondence with the set of fixed points
  $\{ \alpha \in \R: g_n(\alpha) = \alpha \text{\ for some\ } n\}$,
  via the identification as in \cite[Proposition~3.3]{kennedy:08:wfk}
  (see also Remark~3.6(i) there). In particular, we know that
  $\Lambda_k (\Omega, \beta, \gamma) = \lambda_n (\Omega, \alpha)$
  with $\alpha = (\gamma - \Lambda_k)/\beta$ for some $n \geq 1$; we
  have to show $n = k$.

  Now by Lemma~\ref{lemma:cm}(i) each curve $g_n$ is a continuous and
  monotonically decreasing function of $\alpha$. In particular for
  each $n$ there will be exactly one fixed point $\alpha_n \in \R$ for
  which $g_n(\alpha_n)=\alpha_n$. Moreover, by definition $g_n(\alpha)
  \leq g_m(\alpha)$ whenever $n \geq m$ and hence $\alpha_n \leq
  \alpha_m$ if $n \geq m$. It follows inductively that $\lambda_n
  (\Omega, \alpha_n) = \gamma - \alpha_n \beta$ is the $n$th Wentzell
  eigenvalue $\Lambda_n (\Omega, \beta, \gamma)$ for all $n \geq 1$.
\end{proof}

Note that we have $0 < \Lambda_1 (\Omega, \beta, \gamma) = \gamma -
\alpha \beta$ for some $\alpha > 0$ (see
\cite[Remark~5.2]{kennedy:08:wfk}).  In particular, we obtain the
bound $\Lambda_1 (\Omega, \beta, \gamma) < \gamma$ always,
\emph{independent of the volume of $\Omega$}. This yields the
following result, which obviously remains true if we replace $D_k$ by
any domain $\Omega$ having at least $k$ c.c.s.

\begin{lemma}
  \label{lemma:dk}
  We have $\Lambda_k(D_k, \beta, \gamma) < \gamma$ for all $k \geq 1$.
\end{lemma}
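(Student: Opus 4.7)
The plan is to reduce the statement to the one-ball bound $\Lambda_1 < \gamma$ that has just been derived from Lemma~\ref{lemma:wentzellid}. Since $D_k$ is the disjoint union of $k$ congruent copies of the ball $B_{M/k}$ and satisfies the assumptions of Remark~\ref{rem:general}(ii), the spectrum of $D_k$ is obtained by collecting and reordering the eigenvalues of its connected components (applied here to the Wentzell Laplacian rather than the Robin one).

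First I would list the first $k$ Wentzell eigenvalues of $D_k$ explicitly. Each c.c.\ of $D_k$ contributes the value $\Lambda_1(B_{M/k},\beta,\gamma)$ as its lowest eigenvalue, so collecting the first eigenvalue from each of the $k$ balls yields $k$ eigenvalues of $D_k$ all equal to $\Lambda_1(B_{M/k},\beta,\gamma)$. These are the $k$ smallest eigenvalues of $D_k$, because the next eigenvalue contributed by any c.c.\ is $\Lambda_2(B_{M/k},\beta,\gamma)\geq\Lambda_1(B_{M/k},\beta,\gamma)$. Consequently
\[
\Lambda_k(D_k,\beta,\gamma)=\Lambda_1(B_{M/k},\beta,\gamma).
\]

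Second, I would invoke the displayed bound preceding the lemma: applying Lemma~\ref{lemma:wentzellid} to $\Omega=B_{M/k}$ gives $\Lambda_1(B_{M/k},\beta,\gamma)=\gamma-\alpha\beta$ for some $\alpha>0$, so $\Lambda_1(B_{M/k},\beta,\gamma)<\gamma$. Combining the two displayed equalities finishes the proof, and the same argument shows the strengthened statement in the parenthetical remark: for any $\Omega$ having at least $k$ c.c.s, each c.c.\ supplies a first Wentzell eigenvalue strictly less than $\gamma$, so $\Lambda_k(\Omega,\beta,\gamma)<\gamma$ as well.

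There is no real obstacle here; the only point that warrants a line of justification is the reordering principle for disconnected domains, which is exactly what Remark~\ref{rem:general}(ii) is designed to cover, together with the observation that the strict inequality $\Lambda_1<\gamma$ derived just before the lemma is volume-independent and therefore applies to the individual balls $B_{M/k}$ regardless of $k$.
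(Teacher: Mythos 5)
Your proof is correct and follows essentially the same route as the paper's: decompose $D_k$ into $k$ congruent balls $B_{M/k}$, identify $\Lambda_k(D_k,\beta,\gamma)=\Lambda_1(B_{M/k},\beta,\gamma)$ by collecting and reordering the component spectra, and apply the volume-independent bound $\Lambda_1<\gamma$ obtained just before the lemma. Your version merely spells out the reordering step in more detail than the paper does.
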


\begin{proof}
  As in Section~\ref{sec:rhigher}, we write $D_k$ as the disjoint
  union of $k$ balls $B_{M/k}$. Then $\Lambda_k(D_k, \beta, \gamma) =
  \Lambda_1(B_{M/k}, \beta, \gamma) < \gamma$.
\end{proof}

We are now in a position to give the proof of
Theorem~\ref{th:wentzellk}. Since $\beta, \gamma$ are fixed we will
write $\Lambda_k (\Omega, \beta, \gamma) = \Lambda_k (\Omega)$ if
there is no danger of confusion. The following lemma contains the core
of the argument.

\begin{lemma}
  \label{lemma:core}
  Let $\beta, \gamma > 0$ be given and $U, V \subset \R^N$ bounded,
  Lipschitz.
  \begin{itemize}
  \item[(i)] If $\Lambda_k (U) < \gamma$, then for $\alpha:= (\gamma -
    \Lambda_k (U))/\beta$,
    \begin{equation}
      \label{eq:laineq}
      \lambda_k (U, \alpha) \geq \lambda_k (V, \alpha)
    \end{equation}
    implies
    \begin{equation}
      \label{eq:biglaineq}
      \Lambda_k (U) \geq \Lambda_k (V).
    \end{equation}
    If the equality in \eqref{eq:laineq} is strict, then it is also
    strict in \eqref{eq:biglaineq}.
  \item[(ii)] Suppose $\Lambda_k (V) < \gamma$ and let $\alpha:=
    (\gamma - \Lambda_k(V))/\beta$. If \eqref{eq:biglaineq} holds
    (resp.~is strict), then \eqref{eq:laineq} holds (resp.~is strict)
    for this $\alpha$.
  \end{itemize}
\end{lemma}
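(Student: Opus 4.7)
The strategy is to exploit Lemma~\ref{lemma:wentzellid} and the fixed-point picture developed in its proof: for each bounded Lipschitz $\Omega$ the curve $g_k^\Omega(\alpha) := (\gamma - \lambda_k(\Omega, \alpha))/\beta$ is continuous and strictly decreasing on $\R$ (by Lemma~\ref{lemma:cm}(i)), admits a unique fixed point $\alpha_\Omega \in \R$, and this fixed point satisfies $\Lambda_k(\Omega, \beta, \gamma) = \gamma - \alpha_\Omega\,\beta = \lambda_k(\Omega, \alpha_\Omega)$. Since $\beta > 0$, the inequality $\Lambda_k(U) \geq \Lambda_k(V)$ is equivalent to $\alpha_U \leq \alpha_V$, so the whole lemma reduces to translating the two hypotheses into this single relation between fixed points.

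For part (i), the hypothesis gives $\alpha = \alpha_U$ (so that $\lambda_k(U,\alpha) = \Lambda_k(U)$), and dividing \eqref{eq:laineq} by $\beta$ and rearranging yields
\[
  \alpha_U \;=\; g_k^U(\alpha_U) \;\leq\; g_k^V(\alpha_U).
\]
Because $g_k^V$ is strictly decreasing with unique fixed point $\alpha_V$, the condition $g_k^V(\alpha_U) \geq \alpha_U$ forces $\alpha_U \leq \alpha_V$: otherwise $\alpha_U > \alpha_V$ would give $g_k^V(\alpha_U) < g_k^V(\alpha_V) = \alpha_V < \alpha_U$, a contradiction. Strictness is preserved at every step. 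Consequently $\Lambda_k(U) = \gamma - \alpha_U\beta \geq \gamma - \alpha_V\beta = \Lambda_k(V)$, strictly if \eqref{eq:laineq} was strict.

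For part (ii), the hypothesis now reads $\alpha = \alpha_V$ together with $\Lambda_k(U) \geq \Lambda_k(V)$, which under the fixed-point identification is precisely $\alpha_U \leq \alpha_V = \alpha$. Monotonicity of $\lambda_k(U,\cdot)$ then gives
\[
  \lambda_k(U,\alpha) \;\geq\; \lambda_k(U,\alpha_U) \;=\; \Lambda_k(U) \;\geq\; \Lambda_k(V) \;=\; \lambda_k(V,\alpha),
\]
with strictness propagating at either end. The argument is thus little more than careful bookkeeping for a strictly decreasing function, and no genuine obstacle arises; the only point requiring a small amount of vigilance is that $\alpha_U$ may be non-positive when $\Lambda_k(U) \geq \gamma$ (which is possible for $k \geq 2$), but this causes no trouble because the curves $g_k^\Omega$ and their fixed points are defined on all of $\R$ in the setup of Lemma~\ref{lemma:wentzellid}.
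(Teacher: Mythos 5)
Your argument is correct and is essentially the paper's own proof: both rest on the identification $\Lambda_k(\Omega)=\lambda_k(\Omega,(\gamma-\Lambda_k(\Omega))/\beta)$ from Lemma~\ref{lemma:wentzellid} together with the monotonicity of $\lambda_k(\Omega,\cdot)$ from Lemma~\ref{lemma:cm}(i); you merely phrase the comparison via the fixed points $\alpha_U,\alpha_V$ where the paper runs the same chain of inequalities by contradiction. (One cosmetic point: Lemma~\ref{lemma:cm}(i) gives only non-strict monotonicity, so $g_k^V$ is a priori only non-increasing, but your fixed-point comparison survives unchanged since $g_k^V(\alpha)-\alpha$ is still strictly decreasing.)
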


\begin{proof}
  (i) Suppose \eqref{eq:laineq} holds but \eqref{eq:biglaineq} fails.
  Using Lemma~\ref{lemma:wentzellid} and \eqref{eq:laineq}
  respectively,
  \begin{displaymath}
    \begin{split}
      \Lambda_k (U) &= \lambda_k (U, \frac{\gamma - \Lambda_k(U)}{\beta})\\
      &\geq \lambda_k (V, \frac{\gamma - \Lambda_k(U)}{\beta})
      \geq \lambda_k (V, \frac{\gamma - \Lambda_k(V)}{\beta})
      =\Lambda_k (V),
    \end{split}
  \end{displaymath}
  where the second inequality follows from Lemma~\ref{lemma:cm}(i)
  since $\gamma - \Lambda_k (U) \geq \gamma - \Lambda_k (V)$ by the
  contradiction assumption. Hence $\Lambda_k (U) \geq \Lambda_k (V)$,
  contradicting the assumption that \eqref{eq:biglaineq} fails. Now
  suppose \eqref{eq:laineq} is strict and the contradiction assumption
  becomes $\Lambda_k (U) \leq \Lambda_k (V)$. Since the first
  inequality in the above line of reasoning is now strict, we still
  obtain a contradiction as nothing else changes. Hence we cannot have
  equality in \eqref{eq:biglaineq}.

  (ii) Now suppose that \eqref{eq:biglaineq} holds and that
  \eqref{eq:laineq} fails. Interchanging the roles of $U$ and $V$, we
  may argue essentially exactly as in (i) to obtain the desired
  conclusion (and do similarly for strictness).
\end{proof}

\begin{proof}[Proof of Theorem~\ref{th:wentzellk}]
  (i) Suppose $D$ satisfies \eqref{eq:robininwentzell}. Let
  $(\Omega_m)_{m \in \N}$ be a minimising sequence for $\Lambda_k$. By
  Lemma~\ref{lemma:dk}, we may assume $\Lambda_k (\Omega_m) < \gamma$
  for all $m$, so that $(\gamma - \Lambda_k (\Omega_m))/\beta \in (0,
  \gamma/\beta)$ and thus \eqref{eq:robininwentzell} holds for these
  values of $\alpha$. Fixing $m \in \N$, we may apply
  Lemma~\ref{lemma:core}(i) with $\Omega_m$ in place of $U$ and $D$ in
  place of $V$ to conclude $\Lambda_k (\Omega_m) \geq \Lambda_k (D)$.
  Since $(\Omega_m)_{m \in \N}$ was a minimising sequence, $D$ must
  minimise $\Lambda_k (\Omega)$. For the converse, suppose $D$
  satisfies \eqref{eq:wentzellinwentzell}.  Since $\Lambda_k (D) <
  \gamma$ by Lemma~\ref{lemma:dk}, it follows directly from
  Lemma~\ref{lemma:core}(ii) that $D$ satisfies
  \eqref{eq:robininwentzell} for $\alpha = (\gamma - \Lambda_k
  (D))/\beta$.

  (ii) Sharpness in both directions now follows immediately from
  strictness of the inequalities in Lemma~\ref{lemma:core}.

  (iii) Fix $\Omega \neq D_k$. By Theorem~\ref{th:robink}(i), there
  exists $\alpha_\Omega>0$ such that $\lambda_k(\Omega, \alpha) >
  \lambda_k (D_k, \alpha)$ for all $\alpha \in (0, \alpha_\Omega)$. If
  $\beta, \gamma$ are fixed with $\gamma/\beta < \alpha_\Omega$, then
  we have $\lambda_k (\Omega, \alpha) > \lambda_k (D_k, \alpha)$ for
  $\alpha = (\gamma - \Lambda_k(\Omega))/\beta$ in particular. Since
  also $\Lambda_k(D_k) < \gamma$ by Lemma~\ref{lemma:dk}, without loss
  of generality we may assume $\Lambda_k (\Omega) < \gamma$ (otherwise
  $\Lambda_k (\Omega) \geq \gamma > \Lambda_k (D)$ and we are done).
  But in this case it follows from Lemma~\ref{lemma:core}(i) (with
  $\Omega=U$) that $\Lambda_k (\Omega) > \Lambda_k (D_k)$ anyway.

  (iv) Let $k$ and $N$ be such that the conclusion of
  Theorem~\ref{th:robink}(ii) holds. By (iii) it suffices to show
  there exist $\beta, \gamma > 0$ and a domain $\Omega$ with
  $\Lambda_k (\Omega, \beta, \gamma) < \Lambda_k (D_k, \beta,
  \gamma)$. Choose $\Omega$ and $\alpha^*>0$ such that $\lambda_k
  (\Omega, \alpha^*) < \lambda_k (D_k, \alpha^*)$. Now we may write
  $\Lambda_k (D_k, \beta, \gamma) = \Lambda_1 (D_k, \beta, \gamma) =
  \gamma - \alpha \beta$, where $\alpha$ satisfies $(\gamma -
  \lambda_1(D_k, \alpha))\beta = \alpha$. Since $\lambda_1 (D_k,
  \alpha)$ is continuous and monotonic with respect to $\alpha$, an
  elementary argument shows that by fixing $\beta$ and varying
  $\gamma$, we may obtain every $\alpha > 0$ as a solution to $(\gamma
  - \lambda_1(D_k, \alpha))\beta = \alpha$ for some $\beta, \gamma>0$.
  Now choose $\beta, \gamma$ such that $\Lambda_k (D_k, \beta, \gamma)
  = \gamma - \alpha^* \beta$. For this $\beta, \gamma$, we may apply
  Lemma~\ref{lemma:core}(i) with $U = D_k$ and $V = \Omega$ to
  conclude $\Lambda_k (D_k, \beta, \gamma) > \Lambda_k (\Omega, \beta,
  \gamma)$.

  (v) This follows immediately from (i) and (ii) combined with
  Theorem~\ref{th:rp2}.
\end{proof}

\appendix

\section{Some basic eigenvalue properties}
\label{sec:auxiliary}

Here we collect some elementary but useful facts about the behaviour
of the eigenvalues of the Robin and Neumann Laplacians.

\begin{lemma}
  \label{lemma:cm}
  Suppose $\Omega \subset \R^N$ is a fixed Lipschitz domain and $p=2$.
  Then the following assertions are true.
  \begin{itemize}
  \item[(i)] Let $k \geq 1$. Then $\lambda_k (\Omega, \alpha)$ is
    continuous and monotonically increasing as a function of $\alpha
    \in \R$.
  \item[(ii)] For any $\alpha \geq 0$ and $k \geq 1$, we have
    $\lambda_k(\Omega, \alpha) < \mu_k(\Omega)$.
  \item[(iii)] $\lambda_1 (\Omega, \alpha) \to \mu_1(\Omega)$ as
    $\alpha \to \infty$.
  \end{itemize}
\end{lemma}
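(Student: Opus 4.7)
The plan for (i) is to use the Courant--Fischer min-max characterisation
\[
\lambda_k(\Omega, \alpha) = \min_{V_k} \max_{0 \neq u \in V_k}
\frac{\int_\Omega |\nabla u|^2\,dx + \alpha \int_{\partial \Omega} u^2\,d\sigma}{\int_\Omega u^2\,dx}
\]
taken over $k$-dimensional subspaces $V_k \subset W^{1,2}(\Omega)$. Monotonicity is then immediate, since for each fixed $u$ the Rayleigh quotient is affine and non-decreasing in $\alpha$; this property passes to the min-max. For continuity, inserting an optimal $V_k^{(\alpha)}$ at parameter $\alpha$ into the min-max at $\alpha'$ and vice versa yields a two-sided bound $|\lambda_k(\alpha) - \lambda_k(\alpha')| \leq C|\alpha - \alpha'|$ on bounded parameter intervals, where $C$ controls $\int_{\partial\Omega}u^2\,d\sigma$ over $L^2$-normalised trial functions in a finite-dimensional subspace (finite by the trace theorem); hence $\lambda_k$ is locally Lipschitz in $\alpha$.

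For (ii), taking $V_k = \mathrm{span}(\varphi_1, \dots, \varphi_k) \subset W^{1,2}_0(\Omega) \subset W^{1,2}(\Omega)$, where $\varphi_j$ are the first $k$ Dirichlet eigenfunctions, the boundary term in the Rayleigh quotient vanishes on $V_k$ and the min-max immediately yields $\lambda_k(\Omega,\alpha) \leq \mu_k(\Omega)$. The plan to sharpen this is to establish strict monotonicity of $\alpha \mapsto \lambda_k(\Omega,\alpha)$, whence $\lambda_k(\Omega,\alpha) < \lambda_k(\Omega,\alpha+1) \leq \mu_k(\Omega)$. If on the contrary $\lambda_k(\alpha) = \lambda_k(\alpha')$ for some $\alpha < \alpha'$, take an optimal $V_k^{(\alpha')}$ spanned by the first $k$ Robin eigenfunctions at $\alpha'$; inserting it into the min-max at $\alpha$ and using $R_\alpha \leq R_{\alpha'}$ pointwise, equality throughout forces the maximiser $u^*$ of $R_\alpha$ over $V_k^{(\alpha')}$ to satisfy $\int_{\partial\Omega}(u^*)^2\,d\sigma = 0$ and $R_{\alpha'}(u^*) = \lambda_k(\alpha')$. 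Thus $u^* \in W^{1,2}_0(\Omega)$ is an eigenfunction of the Robin problem at $\alpha'$ for eigenvalue $\lambda_k(\alpha')$; its Robin boundary condition together with $u^*|_{\partial \Omega} = 0$ forces $\frac{\partial u^*}{\partial \nu} = 0$ as well, and unique continuation for $-\Delta u^* = \lambda_k(\alpha') u^*$ then yields $u^* \equiv 0$, the desired contradiction. The main obstacle here is this unique continuation step together with the rigorous interpretation of the Neumann trace on a Lipschitz boundary --- both handled via standard $H^{-1/2}$-trace theory combined with the interior $C^{1,\eta}$ regularity of eigenfunctions noted in Proposition~\ref{prop:welldef}(v).

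For (iii), by (i) and (ii) the monotone bounded function $\alpha \mapsto \lambda_1(\Omega, \alpha)$ admits a limit $L := \lim_{\alpha \to \infty} \lambda_1(\Omega, \alpha) \leq \mu_1(\Omega)$. For the reverse inequality, let $\psi_\alpha \in W^{1,2}(\Omega)$ be a first Robin eigenfunction normalised in $L^2(\Omega)$; the identity
\[
\lambda_1(\Omega, \alpha) = \int_\Omega |\nabla \psi_\alpha|^2\,dx + \alpha \int_{\partial \Omega} \psi_\alpha^2\,d\sigma \leq \mu_1(\Omega)
\]
shows that $(\psi_\alpha)$ is bounded in $W^{1,2}(\Omega)$ while $\int_{\partial \Omega} \psi_\alpha^2\,d\sigma = O(1/\alpha) \to 0$. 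Extracting a subsequence converging weakly in $W^{1,2}(\Omega)$, strongly in $L^2(\Omega)$, and whose trace converges in $L^2(\partial \Omega)$ (via Rellich and the compactness of the trace operator on Lipschitz domains), the limit $\psi$ lies in $W^{1,2}_0(\Omega)$ with $\|\psi\|_2 = 1$, hence is an admissible trial function for $\mu_1(\Omega)$. Weak lower semicontinuity of the Dirichlet integral then gives $L \geq \int_\Omega |\nabla \psi|^2\,dx \geq \mu_1(\Omega)$, whence $L = \mu_1(\Omega)$.
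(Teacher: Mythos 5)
Your argument is correct, and it follows the same skeleton as the paper, which simply asserts that (i) and (ii) ``follow immediately from the minimax formula'' (citing Courant--Hilbert) and refers to \cite{giorgi:05:mr} for (iii). What you add is the substance the paper outsources: the strict inequality in (ii) is genuinely not immediate from the min-max (that only gives $\lambda_k \leq \mu_k$ via the trial space $\mathrm{span}(\varphi_1,\dots,\varphi_k) \subset W^{1,2}_0$), and your route --- strict monotonicity in $\alpha$, forcing an equality case in which a $\lambda_k(\alpha')$-eigenfunction $u^*$ has vanishing Dirichlet and (weak) Neumann traces, then extending $u^*$ by zero across $\partial\Omega$ and invoking interior unique continuation --- is a standard and valid way to close that gap on a Lipschitz domain. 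Likewise your compactness proof of (iii) (normalised first eigenfunctions bounded in $W^{1,2}$, boundary integral $O(1/\alpha)$, weak limit in $W^{1,2}_0$ of unit $L^2$-norm, lower semicontinuity) is a clean self-contained substitute for the citation. The one place your writeup is loose is the local Lipschitz continuity in (i): finite-dimensionality of the optimal subspace does not by itself give a constant $C$ bounding $\int_{\partial\Omega}u^2\,d\sigma$ uniformly as the subspace varies with $\alpha$. You should instead combine the trace inequality $\int_{\partial\Omega}u^2\,d\sigma \leq \varepsilon\|\nabla u\|_2^2 + C_\varepsilon\|u\|_2^2$ with the bound $\|\nabla u\|_2^2 \leq \lambda_k(\alpha') + |\alpha'|\int_{\partial\Omega}u^2\,d\sigma$ for $L^2$-normalised $u$ in the optimal subspace at $\alpha'$, which yields a bound on $\int_{\partial\Omega}u^2\,d\sigma$ uniform over $\alpha'$ in a bounded interval (note the lemma is stated for all $\alpha\in\R$, and is used for possibly negative $\alpha$ in Lemma~\ref{lemma:wentzellid}, so you cannot simply take $\int_{\partial\Omega}u^2\,d\sigma \leq \mu_k/\alpha$). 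With that repair the proof is complete.
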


\begin{proof}
  Parts (i) and (ii) follow immediately from the minimax formula for
  the $k$th eigenvalue (see \cite[Section~VI.1]{courant:53:mmp}. Note
  that although \cite{courant:53:mmp} only deals with the case $N=2$,
  none of the relevant arguments depend on the dimension of the
  space). For part (iii), see for example \cite{giorgi:05:mr}.
\end{proof}

Our next lemma expresses in our notation the well-known fact that the
first Neumann eigenvalue of a connected domain is simple, with
constant functions the only eigenfunctions. We omit the proof (see
\cite[Problem~2.2]{gilbarg:83:pde}).

\begin{lemma}
  \label{lemma:con}
  Let $p=2$. If $\Omega$ is bounded, Lipschitz and connected, then
  $\lambda_2 (\Omega, 0) > 0$.
\end{lemma}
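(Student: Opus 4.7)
The plan is to observe that $\lambda_1(\Omega,0)=0$ is attained by constant functions and to show that the eigenspace at $0$ is exactly one-dimensional, whence $\lambda_2(\Omega,0)$, counted with multiplicity, must be strictly positive.

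First I would note that for $\alpha=0$ the eigenvalue problem reduces to the Neumann Laplacian, whose weak form is
\begin{displaymath}
  \int_\Omega \nabla u\cdot\nabla\varphi\,dx = \lambda\int_\Omega u\varphi\,dx\qquad\text{for all }\varphi\in H^1(\Omega).
\end{displaymath}
Taking $\varphi=u$ and combining with the variational characterisation
\begin{displaymath}
  \lambda_1(\Omega,0)=\inf_{u\in H^1(\Omega)\setminus\{0\}}\frac{\int_\Omega|\nabla u|^2\,dx}{\int_\Omega u^2\,dx}\geq 0,
\end{displaymath}
I would observe that the infimum is attained at any nonzero constant function, giving $\lambda_1(\Omega,0)=0$.

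Next I would establish simplicity. If $u\in H^1(\Omega)$ is any eigenfunction with eigenvalue $0$, testing the weak form against $u$ itself yields $\int_\Omega|\nabla u|^2\,dx=0$, so $\nabla u=0$ almost everywhere in $\Omega$. Since $\Omega$ is connected and $u\in H^1(\Omega)$, a standard Sobolev fact (for instance \cite[Problem~2.2]{gilbarg:83:pde}, or a mollification argument applied on each open ball compactly contained in $\Omega$ together with connectedness) forces $u$ to be constant in $\Omega$. Hence the eigenspace associated with eigenvalue $0$ is one-dimensional.

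Because eigenvalues are listed with multiplicity and the first eigenvalue $0$ has multiplicity exactly one, $\lambda_2(\Omega,0)$ is the next eigenvalue strictly above $0$, and in particular $\lambda_2(\Omega,0)>0$. The only step that needs any real care is the passage from $\nabla u=0$ a.e.\ to $u$ constant, which relies on connectedness of $\Omega$ together with the Sobolev regularity of $u$; apart from this the argument is a direct reading of the Rayleigh quotient together with the weak form.
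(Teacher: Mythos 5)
Your argument is correct and is precisely the standard proof the paper alludes to (the paper omits it, citing the same fact from \cite[Problem~2.2]{gilbarg:83:pde}): the zero eigenspace of the Neumann Laplacian on a connected domain consists exactly of the constants, so the eigenvalue $0$ is simple and $\lambda_2(\Omega,0)>0$. The only ingredient you use implicitly is discreteness of the Neumann spectrum, which holds here because $H^1(\Omega)\hookrightarrow L^2(\Omega)$ is compact for bounded Lipschitz $\Omega$, and which the paper already takes for granted.
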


The following equally well-known result is true in general for the
$k$th eigenvalue of \eqref{eq:robinproblem} on any reasonably smooth
domain, although we only need this for the first eigenvalue of a ball.
A proof (for balls) can be found in \cite[Lemma~4.1]{bucur:09:aa}.

\begin{lemma}
  \label{lemma:ball}
  Suppose $1<p<\infty$. Let $B_m$ denote the ball of volume $m$,
  centred at the origin. For $\alpha > 0$ fixed, $\lambda_1 (B_m,
  \alpha)$ is a strictly decreasing, continuous function of $m > 0$.
\end{lemma}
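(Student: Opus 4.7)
The plan is to derive the result from a one-parameter rescaling of the $p$-Laplace Robin problem together with a test-function comparison. Write $B_r$ for the open ball of radius $r$ centred at the origin, so $B_m = B_{r_m}$ with $r_m = (m/\omega_N)^{1/N}$, where $\omega_N$ is the volume of the unit ball.

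First I would record the scaling identity: if $u$ is an eigenfunction of \eqref{eq:robinproblem} on $B_1$ with parameter $\beta$ and eigenvalue $\mu$, then $\tilde u(y) := u(y/r)$ is an eigenfunction on $B_r$ with parameter $\beta/r^{p-1}$ and eigenvalue $\mu/r^p$. A direct chain-rule computation (using that the unit normals of $B_1$ and $B_r$ are parallel) verifies both the equation and the boundary condition. Taking infima in the usual Rayleigh quotient gives
\begin{equation*}
\lambda_1(B_r, \alpha) \;=\; r^{-p}\, \lambda_1\!\bigl(B_1,\; \alpha r^{p-1}\bigr).
\end{equation*}
Continuity in $m$ then reduces to continuity of $\beta \mapsto \lambda_1(B_1,\beta)$ on $(0,\infty)$. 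For $p=2$ this is part of Lemma~\ref{lemma:cm}(i); for general $p$ it follows by the same monotonicity-plus-minimax argument applied to the Rayleigh quotient defining $\lambda_1$, as in \cite[Section~5.5]{le:06:pl}.

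For strict monotonicity, fix $0 < m_1 < m_2$ with radii $r_1 < r_2$, and let $u_1 \in W^{1,p}(B_{m_1})$ be a first eigenfunction on $B_{m_1}$ with parameter $\alpha$; set $A = \int_{B_{m_1}}|\nabla u_1|^p\,dx / \int_{B_{m_1}}|u_1|^p\,dx$ and $B = \int_{\partial B_{m_1}} |u_1|^p\,d\sigma / \int_{B_{m_1}}|u_1|^p\,dx$, so $A + \alpha B = \lambda_1(B_{m_1},\alpha)$. Define $\tilde u(y) := u_1(y\, r_1/r_2)$ on $B_{m_2}$; a change of variables yields $\int_{B_{m_2}}|\nabla \tilde u|^p = (r_1/r_2)^{p-N}\int_{B_{m_1}}|\nabla u_1|^p$, $\int_{\partial B_{m_2}}|\tilde u|^p = (r_2/r_1)^{N-1}\int_{\partial B_{m_1}}|u_1|^p$ and $\int_{B_{m_2}}|\tilde u|^p = (r_2/r_1)^{N}\int_{B_{m_1}}|u_1|^p$. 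Using $\tilde u$ as a test function in the Rayleigh quotient on $B_{m_2}$ gives
\begin{equation*}
\lambda_1(B_{m_2},\alpha) \;\leq\; (r_1/r_2)^p\, A \;+\; \alpha\,(r_1/r_2)\, B.
\end{equation*}
Since $r_1/r_2 < 1$ and $\lambda_1(B_{m_1},\alpha) = A + \alpha B > 0$ forces $A>0$ or $B>0$, the right-hand side is strictly less than $A+\alpha B = \lambda_1(B_{m_1},\alpha)$, proving strict decrease.

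There is no real obstacle here — the whole argument is a standard scaling plus one-sided Rayleigh comparison. The only point that deserves care is handling the two terms in the boundary-weighted Rayleigh quotient simultaneously: they scale by different powers of $r_1/r_2$, and one must observe that \emph{both} exponents are positive so that the combined factor is strictly less than $1$ regardless of whether the infimum is ``dominated'' by the bulk or by the boundary term. That is why one reads off $A>0$ or $B>0$ from $\lambda_1>0$ and does not need to know which of the two is nonzero.
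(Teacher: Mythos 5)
Your argument is correct and complete: the scaling identity $\lambda_1(B_r,\alpha)=r^{-p}\lambda_1(B_1,\alpha r^{p-1})$ is verified correctly, the change-of-variables exponents in the test-function comparison are right, and the observation that both scaling factors $(r_1/r_2)^p$ and $(r_1/r_2)$ are strictly less than $1$ (so strictness holds whichever of $A$, $B$ is positive) is exactly the point that needs care. The paper itself does not prove this lemma but delegates it to \cite[Lemma~4.1]{bucur:09:aa}, so your proof is a self-contained substitute; the only step you gloss slightly is continuity of $\beta\mapsto\lambda_1(B_1,\beta)$, which follows most cleanly by noting that this function is an infimum of affine functions of $\beta$, hence concave and finite on $\R$, hence continuous.
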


\smallskip

\noindent{\textbf{Acknowledgements.}} The author offers his warmest
thanks Daniel Daners for many helpful discussions and suggestions.

\def\cprime{$'$}

\providecommand{\bysame}{\leavevmode\hbox to3em{\hrulefill}\thinspace}
\providecommand{\MR}{\relax\ifhmode\unskip\space\fi MR }
\providecommand{\MRhref}[2]{%
  \href{http://www.ams.org/mathscinet-getitem?mr=#1}{#2}
}
\providecommand{\href}[2]{#2}

\end{document}